\newtheorem{Thm}{Theorem}[section]
\newtheorem{Cor}[Thm]{Corollary}						
\newtheorem{Prop}[Thm]{Proposition}					
\newtheorem{Lemma}[Thm]{Lemma}
\theoremstyle{definition}
\numberwithin{equation}{section}								
\def\da{Diophantine approximation}
\def\q{\mathbb{Q}}								
\def\r{\mathbb{R}}								
\def\z{\mathbb{Z}}								
\def\x{\mathbf{x}}
\def\n{\mathbb{N}}								
\def\h{\mathbb{H}}	
\def\ve{\varepsilon}
\newcommand\inv{^{-1}}								
\newcommand {\ignore}[1]  {}
\newcommand\lb{\left\langle}								
\newcommand\rb{\right\rangle}			
\newcommand\no{\noindent}
\def\SL{\operatorname{SL}}
\def\SO{\operatorname{SO}}
\def\BA{\operatorname{BA}}
\def\rpq{r_\alpha ({\bf p},q)}
\def\ggm{G/\Gamma}
\newcommand\comm[1]{\textcolor{red}{#1}}
\def\e{{\bf e}}
\def\u{{\bf u}}
\def\v{{\bf v}}
\def\x{{\bf x}}
\def\pq{\frac{{\bf p}}{q}}							
\def\nz{\smallsetminus\{0\}}
\newcommand\hs{homogeneous space}
\def\hd{Hausdorff dimension}
\newcommand\eq[2]{\begin{equation}\label{eq:#1}{#2}\end{equation}}
\newcommand {\equ}[1]     {\eqref{eq:#1}}
\title{Rational Approximation on Spheres}
\author{Dmitry Kleinbock \\ Keith Merrill}
\date{May 22, 2013}
\address{Department of Mathematics, Brandeis University, Waltham MA 02454, USA}
\email{kleinboc@brandeis.edu, merrill2@brandeis.edu}
\begin{document}

\maketitle

\begin{abstract} 
\no We quantify the density of rational points in the unit sphere $S^n$, proving analogues of the classical theorems 
on the embedding of $\q^n$ into $\r^n$. Specifically, we prove a Dirichlet theorem stating that every point $\alpha \in S^n$ is sufficiently approximable, the optimality of this approximation via the existence of badly approximable points, and a Khintchine theorem showing that the Lebesgue measure of approximable points is either zero or 
full
depending on the convergence or divergence of a certain sum. These results complement and improve on previous results, particularly recent theorems of Ghosh, Gorodnik and Nevo.
\end{abstract}

\section{Introduction}
\subsection{Motivation}
The field of Diophantine approximation seeks to {\it quantify} the density of a subset $A$ in a metric space $X$. Classical examples include the density of $\q$ in $\r$ or of a number field $K$ in its completion. 
One can also study the density of rational points in certain subsets $X$ of $\r^{m}$, specifically level sets of 
rational quadratic forms on $\r^{m}$. In this paper we analyze the case of spheres $S^n$, deferring the general case (of quadric hypersurfaces in $\r^{n+1}$) to a forthcoming work \cite{FKMS}, see \S\ref{alt methods} for more detail. Rational points on the sphere can be represented as $\pq$ with $q \in \n$ and ${\bf p} \in \z^{n+1}$ primitive. We want to measure the distance between $\alpha \in S^n$ and such a point $\pq$ against its complexity $q$. Unless otherwise specified, we will use the supremum norm $\|\cdot\|$ on $\r^{n+1}$ to measure distance.

It will be convenient to introduce the following general definition: for a  
subset $X$ of $\r^{m}$ and a function $\phi:\n \to (0,\infty)$, say that 
 $\alpha \in X$ is {\sl $\phi$-approximable in $X$\/}  if 
there exist 
infinitely many $({\bf p},q)\in\z^{m+1}$ with $\pq \in X$ such that \eq{phia}{\left\| \alpha - \pq \right\| <  \phi(q)\,;} 
the set of  points $\phi$-approximable in $X$ will be denoted by  $A(\phi,X)$.
Note that rational points  are $\phi$-approximable in $X$ for any positive function $\phi$, and if $\alpha$ is irrational then `infinitely many $({\bf p},q)\in\z^{m+1}$ with $\pq \in X$' can be replaced by `infinitely many $\pq \in \q^{m}\cap X$'. 

The requirement  $\pq \in X$ distinguishes the above set-up from the one usually considered in \da\ on manifolds -- there one studies rates of approximation of points on a manifold $X$ by rational points which do not have to lie in $X$. In other words, in this paper we are studying {\it intrinsic\/} \da\ on manifolds, as opposed to the existing very rich theory of 
approximation by rational points of the ambient space, see e.g.\ \cite{B, BD, KM3}.

The classical case $X = \r^m$ can be considered as a motivation. With the  notation \eq{phi tau}{\phi_\tau (x) := x^{-\tau}\,,} 
we have the following 
basic facts, see \cite{Schmidt-1980}:

\begin{itemize}
\item[$\bullet$] Dirichlet's Theorem on simultaneous \da\ 
implies that any $\x\in\r^m$ is 
$\phi_{1 + 1/m}$-approximable. 
\item[$\bullet$] For sufficiently small $c > 0$ the complement of $A(c\phi_{1 + 1/m}, \r^m)$ is non-empty; in fact, the union of complements to $A(c\phi_{1 + 1/m}, \r^m)$, called the set of {\sl badly approximable\/} vectors, has full \hd.

\item[$\bullet$] Khintchine's Theorem asserts that, when $x\mapsto x\phi(x)$ is non-increasing\footnote{In fact, the theorem holds for $\phi$ decreasing \cite{BDV}.}, almost every (resp.\ almost no) $\x\in\r^m$ is $\phi$-approximable 
assuming the sum $$\sum_{k= 1}^\infty \big(k\phi(k)\big)^m$$ diverges (resp.\ converges). 
\item[$\bullet$] By a theorem of Jarn\'ik, the set of $\phi_\tau$-approximable points, 
where $\tau$ is at least $ 1 + 1/m$,  has \hd\ $\frac{m+1}\tau$.
\end{itemize}


In this paper we will study the case $X = S^n$, the Euclidean unit sphere
in $\r^{n+1}$. The question of intrinsic approximation on spheres has been studied in the literature
 implicitly by Dickinson-Dodson  \cite{DD} and Drutu \cite{Drutu}\footnote{Both \cite{DD} and \cite{Drutu} study approximations by rationals in $\r^{n+1}$, but use the algebraic nature of $S^n$ to reduce their problems to 
intrinsic approximation.}, and explicitly by Schmutz \cite{Schmutz} and Ghosh-Gorodnik-Nevo \cite{GGN1, GGN2} (we note that in the two latter papers the generality is much wider, the subject being 
$S$-rational points on homogeneous varieties).  
We also mention that rational approximations of points on $S^2$ and $S^3$ can be obtained from the construction of Lubotzky-Phillips-Sarnak \cite{LPS, LPS2} of dense subgroups of $\SO(3)$ with entries in $\z[\frac1{p}]$ having optimal spectral gap. For $n \geq 4$, Oh \cite{Oh}, following earlier work of Clozel \cite{Clozel},  constructs subsets of $\SO(n)$ with lower bounds on their spectral gap, extending the previous construction. These quantitative equidistribution statements give rise to quantitative density statements of Hecke points, and thus a fortiori rational points. 
The method of the present paper is different: we use a connection between \da\ on spheres and dynamics/geometry of the 
quotient of $G = \SO(n+1,1)$ by a lattice $\Gamma$
and deduce intrinsic analogues of a number of basic results in \da, strengthening what has been known before. We note that all the results of this paper can also be derived by an alternative approach: relating intrinsic approximation on $S^n$ to the set-up of approximation of limit points of a lattice in $\SO(n+1,1)$ by its parabolic fixed points, and using results from  \cite{BDV, HV, Pat, SV} and a recent preprint \cite{FSY}. This approach was used in \cite{Drutu} and will be further elaborated upon in \cite{FKMS}.

\subsection{Statement of results}

Our main theorems are as follows. The first one gives an analogue of Dirichlet's theorem (see Theorem \ref{true dirichlet} for a stronger statement):

\begin{Thm} \label{special dirichlet}
There exists a constant $C \geq 1$ such that for every $\alpha \in S^n$, there exist infinitely many rationals $\pq \in S^n$ such that $$\left\| \alpha - \pq \right\| < \frac{C}{q}.$$
\end{Thm}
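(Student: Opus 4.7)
The plan is to realize $S^n$ as the conformal boundary of $\h^{n+1}$, reformulate the theorem as a uniform Minkowski-type bound on the integer null cone, and then pull the bound back via geodesic flow to obtain the Dirichlet approximation. Let $Q(x_0, \ldots, x_{n+1}) = -x_0^2 + x_1^2 + \cdots + x_{n+1}^2$ on $\r^{n+2}$, realize $\h^{n+1}$ as the forward sheet of $\{Q = -1\}$, and identify $\partial \h^{n+1}$ with $S^n$ via the map $(q,\mathbf{p}) \mapsto \mathbf{p}/q$ on the projectivized forward light cone. Rational points $\pq \in S^n$ then correspond precisely to forward integer null vectors in $\z^{n+2}\nz$. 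The group $\Gamma \df \SO(Q,\z) \cap G$, with $G \df \SO(n+1,1)^0$, is a non-uniform lattice in $G$ (since $Q$ is $\q$-isotropic), and the cusps of $\Gamma\backslash\h^{n+1}$ correspond to the finitely many $\Gamma$-orbits on $S^n \cap \q^{n+1}$.

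The main step is to show that the function
\[
\Phi(x) \df \inf\bigl\{-\langle x,v\rangle_Q : v \in \z^{n+2}\nz,\ Q(v)=0,\ v_0>0\bigr\}
\]
is bounded above on $\h^{n+1}$ by a universal constant $C_0$. Since $\Gamma$ permutes integer null vectors, $\Phi$ descends to $\Gamma \backslash \h^{n+1}$, where it is strictly positive and upper semicontinuous. For each cusp $\xi$ represented by an integer null vector $v_\xi$, a direct computation in the hyperboloid model (using that $v_\xi$ spans the null direction being approached) gives $-\langle x,v_\xi\rangle_Q \to 0$ as $x \to \xi$ from the interior, so $\Phi$ extends by zero to every cusp. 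An upper-semicontinuous function on the resulting compact space attains its supremum, which gives the bound.

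Given $\alpha \in S^n$, apply the bound along the geodesic ray $\gamma_\alpha(t) = \cosh(t)\, o + \sinh(t)\, (0,\alpha)$ with $o=(1,0,\ldots,0)$. For each $t>0$ one obtains a forward integer null $v_t = (q_t,\mathbf{p}_t)$ with
\[
q_t\bigl(\cosh t - \sinh t \cos\theta_t\bigr) \le C_0,
\]
where $\theta_t$ is the angle between $\alpha$ and $\mathbf{p}_t/q_t$. Splitting $\cosh t - \sinh t\cos\theta_t = e^{-t} + \sinh(t)\cdot 2\sin^2(\theta_t/2)$ gives $q_t \lesssim e^t$ and $q_t \sinh(t)\,\theta_t^2 \lesssim 1$ simultaneously; combined with $\|\alpha - \mathbf{p}_t/q_t\| \asymp \theta_t$, this delivers $\|\alpha - \mathbf{p}_t/q_t\| \le C/q_t$ for a universal $C$ (one can take $C$ of order $C_0$). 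Distinctness of the $v_t$ along a sequence $t_k \to \infty$ for irrational $\alpha$ is automatic, for if $v=(q,\mathbf{p})$ with $\mathbf{p}/q \ne \alpha$ appeared for arbitrarily large $t$ then $\cosh t - \sinh t \cos\theta$ would blow up, violating the bound; rational $\alpha$ is immediate from the convention that $(\mathbf{p},q)$ need not be primitive in the definition of $\phi$-approximability. The main obstacle is the preceding step: verifying that $\Phi \to 0$ at every cusp and that the upper-semicontinuity plus vanishing at cusps really do globalize to a uniform bound — this is where the reduction theory of the arithmetic lattice $\Gamma \subset \SO(n+1,1)$, and specifically the identification of its cusp neighborhoods with horoballs centered at integer null directions, enters.
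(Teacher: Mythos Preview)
Your argument is correct and closely parallel to the paper's, but packaged in the language of hyperbolic geometry rather than homogeneous dynamics. The paper works on the space of lattices $\mathcal{L}=G/\Gamma$ with the function $\omega(\Lambda)=\min_{0\ne v\in\Lambda}\|v\|$; its key step (Corollary~\ref{mc}) is that $\omega$ is uniformly bounded above, proved via Siegel sets and a Mahler-type compactness criterion, and the Dirichlet inequality is then read off along the orbit $g_tr_\alpha\Lambda_0$ using Lemma~\ref{close vector}. You instead work on $\Gamma\backslash\h^{n+1}$ with the Busemann-type function $\Phi(x)=\min_v(-\langle x,v\rangle_Q)$, bound it by observing it vanishes at the cusps and invoking upper semicontinuity on the cusp compactification, and then evaluate along the geodesic $\gamma_\alpha$. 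The two functions are equivalent up to constants (for $x=g\cdot o$ one has $-\langle x,v\rangle_Q=(g^{-1}v)_0\asymp\|g^{-1}v\|$ on the null cone), and ``$\Phi\to 0$ at the cusps'' is exactly the content of the paper's Mahler criterion. The paper explicitly flags your viewpoint in the introduction and in \S\ref{alt methods} as the ``approximation of limit points by parabolic fixed points'' approach; its virtue is geometric transparency in rank one, while the paper's $G/\Gamma$ formulation is what carries over to higher-rank quadrics in the sequel \cite{FKMS}. One small point: your compactness step still leans on reduction theory (finitely many cusps, each represented by an integer null direction), so the inputs are ultimately the same as the paper's.
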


Previously Fukshansky \cite{Fukshansky} used a theorem of Hlawka \cite{Hlawka} about approximations of real numbers by Pythagorean triples to establish Theorem \ref{special dirichlet} in the special case of $S^1$, and showed that one can take $C = 2\sqrt{2}$. In \cite{Schmutz} a version of the above theorem was established for all $n$ with $\phi_1$ replaced by $\phi_{1/2\lceil{\log_2(n+1)}\rceil}$ and with an explicit dependence of $C$ on $n$. Later Ghosh, Gorodnik and Nevo \cite{GGN1} did the same with $\phi_1$ replaced by $\phi_{\frac14-\ve}$ for all $n$ and with $C$ dependent on $\alpha$ and $\ve$ (see \S\ref{dt} for a more precise statement of their results).

We show that $C$ in the above theorem cannot be replaced by an arbitrary small constant, by considering the set of $\alpha \in S^n $ which are {\it badly approximable in }$S^n$, that is, not $c\phi_{1}$-approximable in $S^n$ for some $c > 0$:
$$
\BA(S^n) := \left\{ \alpha \in S^n : \exists c = c(\alpha) \text{ such that } \forall \pq, \left\| \alpha - \pq \right\| > \frac{c}{q} \right\}.
$$
Analogously to Dani's result \cite{Da1} on the correspondence between simultaneous \da\ 
and homogeneous actions, we show that  
 $\alpha\in\BA(S^n) $ if and only if a certain trajectory on 
 $\ggm$
 is bounded. Then, using 
 \cite{Da2}, we establish

\begin{Thm} \label{ba} The set $\BA(S^n)$ 
is  thick. 
\end{Thm}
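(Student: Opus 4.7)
The strategy is to establish a Dani-type correspondence between $\BA(S^n)$ and the set of points with bounded forward trajectory in $\ggm$, where $G = \SO(n+1,1)$ and $\Gamma$ is a suitable lattice, and then to invoke the theorem of Dani \cite{Da2} on thickness of the set of bounded orbits of an $\r$-diagonalizable one-parameter subgroup.

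First, I would identify $S^n$ with the visual boundary at infinity of hyperbolic $(n+1)$-space $\h^{n+1}$, on which $G$ acts by isometries. Let $\{a_t\}\subset G$ be a one-parameter $\r$-diagonalizable subgroup corresponding to the geodesic flow, and let $U\cong\r^n$ be the expanding horospherical subgroup fixing a chosen point $\alpha_0\in S^n$. The orbit map $\u\mapsto \u\cdot\alpha_0$ gives a smooth parametrization of $S^n\smallsetminus\{\alpha_0\}$ by $U$. Choose an arithmetic lattice $\Gamma<G$ (namely the stabilizer of the integer lattice of the quadratic form defining $S^n$, up to a finite-index subgroup) whose parabolic fixed points in $S^n$ are exactly the rational points of $S^n$, with the depth of the corresponding cusp in $\ggm$ matched to $\log q$ up to an additive constant.

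Second, establish the correspondence: for $\alpha\in S^n\smallsetminus\{\alpha_0\}$ parametrized by $\u_\alpha\in U$, one has $\alpha\in\BA(S^n)$ if and only if the forward trajectory $\{a_t\u_\alpha\Gamma:t\ge 0\}$ is bounded in $\ggm$. This follows from a shadow-lemma type computation in hyperbolic geometry: an excursion of the trajectory of depth $s$ at time $t$ into a cusp corresponds precisely to the existence of a rational $\pq\in S^n$ with $q\asymp e^t$ and $\|\alpha-\pq\|\asymp e^{-t-s}$. Unboundedness of the trajectory therefore corresponds to rational approximations with $q\|\alpha-\pq\|$ arbitrarily small, i.e.\ to $\alpha\notin\BA(S^n)$.

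Finally, by \cite{Da2}, the set of $\u\in U$ for which $\{a_t\u\Gamma:t\ge 0\}$ is bounded in $\ggm$ is thick in $U$. Since the orbit map $U\to S^n\smallsetminus\{\alpha_0\}$ is a smooth diffeomorphism, bilipschitz on compact subsets, thickness transfers to $\BA(S^n)\cap(S^n\smallsetminus\{\alpha_0\})$; varying $\alpha_0$ over $S^n$ one concludes that $\BA(S^n)$ is thick in $S^n$.

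The principal obstacle lies in calibrating the correspondence in the second step: one must identify the correct arithmetic lattice $\Gamma$ so that its $\Gamma$-orbits on cusps biject naturally with rational points of $S^n$ (which may require passing to a finite-index subgroup to ensure a single cusp orbit, or accounting for finitely many), and then verify that the ambient Euclidean distance $\|\alpha-\pq\|$ on the sphere is comparable, up to explicit multiplicative constants, to the quantity controlling cusp excursions in $\ggm$. Once these constants are matched, Dani's theorem applies essentially off-the-shelf.
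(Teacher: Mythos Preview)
Your plan is correct and follows essentially the same route as the paper: establish that $\alpha\in\BA(S^n)$ iff the forward $g_t$-trajectory through the corresponding point of $\ggm$ is bounded, invoke Dani's theorem \cite{Da2} for thickness of bounded orbits along the expanding horospherical subgroup, and transfer thickness to $S^n$ via a bi-Lipschitz parametrization. The only difference is in the mechanics of the correspondence step: the paper proves it by specializing its dictionary (Theorem~\ref{dictionary}) to $\phi=c\phi_1$, for which the associated function $\rho$ is constant, and then reading off boundedness from the Mahler-type criterion (Corollary~\ref{mahlers}); you instead sketch the equivalent hyperbolic-geometry/shadow-lemma argument via cusp excursions, an alternative the paper itself acknowledges in \S\ref{alt methods}.
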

 
Here and hereafter we say that a subset of a metric space $X$ if {\it thick\/} if its intersection with any nonempty open subset of $X$ has full \hd.  

\medskip
A correspondence with dynamics also helps us to derive an analogue of Khintchine's Theorem, from which, in particular, it follows that $\BA(S^n)$ has Lebesgue measure zero.
Indeed, note that $
A(\phi,X)$ 
is  the limsup set of the family of balls $$\left\{B\left(\pq, \phi(q)\right): \pq \in S^n\cap \q^{n+1}\right\}.$$ Since up to a constant the Lebesgue measure of $B\big(\pq, \phi(q)\big)$  is $\phi(q)^n$, it is a consequence of the Borel-Cantelli Lemma that if the sum $
\sum_{\pq \in S^n} \phi(q)^n $
 converges, then the Lebesgue measure of $A(\phi,S^n)$ is zero. Furthermore, it follows from \cite{HB} that 
 \eq{count}{\# \left\{ \pq \in S^n \cap \q^{n+1} : q \leq N\right\} \ll  N^n} for  all $N > 0$ (here and hereafter $\ll$ means that the left hand side is bounded from above by the right hand side times a constant possibly dependent on $n$). 
 We refer the reader to \cite{Browning} for a nice introduction on counting rational points on varieties, and to \cite{Drutu} where counting results are derived from equidistribution of translates of horocycles. Given the above estimate,  one can deduce the following convergence-type statement for a non-increasing function $\phi$: 
\begin{eqnarray*}
\displaystyle\sum_{\pq \in S^n \cap 
\q^{n+1}} \phi(q)^n &=& \displaystyle\sum_\ell \displaystyle\sum_{\pq \in S^n \cap 
\q^{n+1},\,q \in (2^\ell, 2^{\ell+1}]} \phi(q)^n \\ &\leq& \displaystyle\sum_\ell \# \left\{ \pq \in S^n \cap 
\q^{n+1} : q \in (2^\ell, 2^{\ell+1}]\right\} \phi(2^\ell)^n\\ &\ll& \displaystyle\sum_\ell  (2^{\ell+1})^n \phi(2^\ell)^n \ll \int^\infty_{\ell=1} (2^\ell)^n \phi(2^\ell)^n d\ell \\ & \ll& \int^\infty_{\ell=1}  (2^\ell)^{n-1} \phi(2^\ell)^n d(2^\ell)  \ll \displaystyle\sum_k k^{n-1} \phi(k)^n.
\end{eqnarray*}
Therefore, if the series
\eq{sum}{\sum_{k=1}^\infty k^{n-1} \phi(k)^n} converges, it follows that almost no point $\alpha \in S^n$ is $\phi$-approximable. 

\medskip
The following theorem furnishes the converse result:
\begin{Thm} \label{khintchine} For any 
$\phi: \n \to (0,\infty)$ such that 
\eq{extraregularity}{\text{the function }k\mapsto k\phi(k)\text{ is non-increasing,}}
the Lebesgue measure of $A(\phi,S^n)$ is full (resp.\ zero) if and only if the sum \equ{sum} diverges (resp.\ converges).
\end{Thm}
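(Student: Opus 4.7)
The convergence direction is essentially handled by the discussion preceding the statement: the Heath-Brown estimate \equ{count} combined with the Borel-Cantelli Lemma yields that $A(\phi,S^n)$ has zero Lebesgue measure whenever the series \equ{sum} converges, with the monotonicity \equ{extraregularity} used only to pass from the sum indexed by rationals to the sum indexed by positive integers. So the work lies in the divergence direction.

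The plan is to exploit the same homogeneous dynamics correspondence that underlies Theorem \ref{ba}. Via stereographic projection, $S^n$ is identified with (most of) the visual boundary of hyperbolic $(n+1)$-space; the rational points $\pq\in S^n$ correspond to parabolic fixed points of a suitable arithmetic lattice $\Gamma$ in $G=\SO(n+1,1)$, with the denominator $q$ playing the role of a height coordinate on the cusp of $\ggm$. Under this dictionary, $\phi$-approximability of $\alpha$ becomes a statement about excursions of the geodesic trajectory associated with $\alpha$ into a family of shrinking cusp neighborhoods whose depths are read off from $\phi$. Once this dictionary is in place, the divergence half of Theorem \ref{khintchine} becomes a special case of a Khintchine-type dichotomy for cusp excursions of the geodesic flow on a finite-volume hyperbolic manifold, in the spirit of Sullivan's work and its refinements by Stratmann-Velani and Kleinbock-Margulis. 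The necessary positive-measure conclusion follows from mixing of the geodesic flow (equivalently, from quantitative equidistribution of expanding horospheres), which supplies the quasi-independence needed to run divergence Borel-Cantelli for the limsup set; the volume of a cusp neighborhood at height $q$, which is proportional to $q^{-n}$ because the cusp has rank $n$, is what identifies the critical series with \equ{sum}.

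The main obstacle is setting up the dictionary precisely. One has to verify that the denominator $q$ on the sphere side agrees, up to uniform multiplicative constants, with the cusp-height function on the dynamical side, and that the monotonicity hypothesis \equ{extraregularity} translates to the regularity required by the dynamical Khintchine theorem. Granting this, the claimed dichotomy follows by comparing the critical sum for cusp excursions with \equ{sum}: the exponent $n$ of $\phi(k)$ accounts for the $n$-dimensional volume of a ball in $S^n$, while the factor $k^{n-1}$ reflects the rank-$n$ growth of the counting function for rational points of bounded denominator, so that a geodesic visits the relevant shrinking targets infinitely often for almost every direction if and only if \equ{sum} diverges.
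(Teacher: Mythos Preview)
Your outline is correct and follows essentially the same strategy as the paper: translate $\phi$-approximability into a shrinking-target problem for the one-parameter flow $g_t$ on $G/\Gamma$, then invoke the Kleinbock--Margulis Borel--Cantelli machinery (the $n$-DL property of the cusp-depth function, Lemma~\ref{measures}) to decide the measure of the limsup set. Your framing via stereographic projection and parabolic fixed points is the alternative viewpoint the paper mentions in \S\ref{alt methods}; the paper itself works instead with the light-cone embedding $\alpha\mapsto(\alpha,1)$ and the function $\omega(\Lambda)=\min_{0\ne\v\in\Lambda}\|\v\|$, but both parametrize the same horospherical foliation and feed into the same dynamical Borel--Cantelli statement (Theorem~\ref{dynamicalkhintchine}).

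Where you are candid about a gap---setting up the dictionary precisely---is exactly where the paper does the real work. Theorem~\ref{dictionary} makes the transform explicit, $\rho(t)=e^{-t}\phi^{-1}\big(2/(\sqrt{n+1}\,e^t)\big)$; Lemma~\ref{phirho} shows that $\int\rho(t)^n\,dt$ and $\sum k^{n-1}\phi(k)^n$ converge together via a change of variables and an integration by parts that uses \equ{extraregularity}; and Lemma~\ref{measures} gives the cusp-volume estimate $\mu(\{\omega<\ve\})\asymp\ve^n$. The one step your sketch leaves implicit is the passage from Haar-a.e.\ $\Lambda\in\mathcal{L}$ to Lebesgue-a.e.\ $\alpha\in S^n$: the paper handles this by a Fubini argument using the bi-Lipschitz section $\alpha\mapsto h_\alpha$ of Lemma~\ref{biLip} together with the fact that $h_\alpha r_\alpha^{-1}\in UH^0$, so that perturbing in the non-expanding directions does not change whether the trajectory hits the shrinking targets infinitely often.
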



We point out that Ghosh, Gorodnik, and Nevo \cite{GGN2} have recently proven various Khintchine-type results for intrinsic approximation on 
homogeneous varieties. 
In particular, they show that if
that if for some $a > c \cdot n$ (where $c$ is an explicitly computable constant $\geq 2$), $$\displaystyle\sum_{\pq \in S^n \cap 
\q^{n+1}} \phi(q)^a = \infty\,,$$ then the Lebesgue measure of $A(\phi,S^n)$ is full. Although, as noted previously, the results of \cite{GGN2} are more general, for approximations by rational points our result is much stronger, providing an exact converse to the convergence case above.

\medskip
 
As was suggested to us by V.\ Beresnevich, using the notion of mass transference developed in \cite{BeresnevichVelani_mass_transference} it is possible to strengthen Theorem \ref{khintchine} to obtain the following Hausdorff measure version:

\begin{Thm}\label{h measures sphere}
Let  $\phi$ be as in Theorem \ref{khintchine} and let $f \colon (0,\infty) \to (0,\infty)$ be a dimension function (see \S\ref{jt} for a definition) such that 
\eq{extraregularityhd}{\text{the function }k\mapsto k^nf\big(\phi(k)\big)\text{ is non-increasing,}}
Then the $f$-dimensional Hausdorff measure of $A(\phi,S^n)$ is full (resp.\ zero) if and only if the sum \eq{sumhd}{\sum_{k=1}^\infty k^{n-1} f\big(\phi(k)\big)} diverges (resp.\ converges). Consequently, for any $\tau \ge 1$ and $\phi_\tau$ defined by \equ{phi tau}, the \hd\ of $A(\phi_\tau,S^n)$ is equal to $n/\tau$. 
\end{Thm}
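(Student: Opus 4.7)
The plan is to derive Theorem \ref{h measures sphere} by upgrading the Lebesgue version (Theorem \ref{khintchine}) via the Mass Transference Principle of Beresnevich--Velani \cite{BeresnevichVelani_mass_transference}, which is designed exactly to promote a full-Lebesgue-measure statement about a $\limsup$ of balls to a full Hausdorff $f$-measure statement about a $\limsup$ of smaller concentric balls. Since $S^n$ is Ahlfors $n$-regular, it lies in the scope of their framework.

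For the convergence case, $A(\phi, S^n)$ is the $\limsup$ of the balls $B(\pq, \phi(q))$, so by the Hausdorff--Cantelli lemma it suffices to prove that $\sum_{\pq \in S^n \cap \q^{n+1}} f(\phi(q)) < \infty$. Combining the counting bound \equ{count} with the hypothesis \equ{extraregularityhd} (the analogue for $f \circ \phi$ of the $\phi^n$-monotonicity used in the introduction) and a dyadic decomposition identical in structure to the one displayed there for the Lebesgue case, this sum is bounded up to constants by $\sum_k k^{n-1} f(\phi(k))$, whose convergence gives $\mathcal{H}^f(A(\phi,S^n)) = 0$.

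For the divergence case, introduce the auxiliary approximation function $\psi(q) \df f(\phi(q))^{1/n}$. The hypothesis \equ{extraregularityhd} translates exactly into the statement that $k \mapsto k\psi(k)$ is non-increasing, while $\sum_k k^{n-1} \psi(k)^n$ is literally equal to $\sum_k k^{n-1} f(\phi(k))$. Applying Theorem \ref{khintchine} to $\psi$, the set $A(\psi, S^n) = \limsup B\bigl(\pq, f(\phi(q))^{1/n}\bigr)$ has full Lebesgue measure in $S^n$. The Mass Transference Principle with $\delta = n$ and dimension function $f$ (these balls are exactly the ones denoted $B^f$ in \cite{BeresnevichVelani_mass_transference}) now upgrades this to the statement that $A(\phi, S^n)$ has full $f$-dimensional Hausdorff measure.

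For the \hd\ consequence, apply the divergence case with $f(r) = r^{n/\tau}$: condition \equ{extraregularityhd} reads "$k^{n} \cdot (k^{-\tau})^{n/\tau} = 1$ is non-increasing", trivially true, and the sum becomes $\sum k^{-1}$, divergent, so the \hd\ of $A(\phi_\tau, S^n)$ is at least $n/\tau$. The matching upper bound comes from the convergence direction with $f(r) = r^{s}$ for any $s > n/\tau$, for which $\sum k^{n-1-\tau s}$ converges. The only substantive point requiring attention is verifying that the formulation of \cite{BeresnevichVelani_mass_transference} applies directly on $S^n$ with the metric inherited from $\r^{n+1}$; this relies on the Ahlfors $n$-regularity of the sphere and is a standard check rather than a real obstacle, with the rest of the argument being bookkeeping parallel to the Lebesgue case already handled in Theorem \ref{khintchine}.
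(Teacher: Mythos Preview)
Your proposal is correct and follows essentially the same approach as the paper: the convergence case is handled via the Hausdorff--Cantelli lemma together with the dyadic decomposition and the counting bound \equ{count}, while the divergence case introduces the auxiliary function $\psi=(f\circ\phi)^{1/n}$, applies Theorem~\ref{khintchine} to $\psi$ (using that \equ{extraregularityhd} is equivalent to $k\mapsto k\psi(k)$ being non-increasing), and then invokes the Mass Transference Principle; the dimension computation with $f(r)=r^{n/\tau}$ and $f(r)=r^s$ for $s>n/\tau$ is likewise identical.
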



This reproves the results of \cite{Drutu}  in the case of unit spheres (see \cite[Theorems 1.1 and 4.5.7]{Drutu}, for $n=1$  it was done previously by 
 Dickinson and Dodson \cite{DD}). Note that with an alternative approach\footnote{suggested to us by S.\ Velani} based on the notion of ubiquity, the assumptions \equ{extraregularity} and \equ{extraregularityhd}  of Theorems \ref{khintchine} and \ref{h measures sphere} respectively can be weakened to just the monotonicity of $\phi$; see \S\ref{alt methods}  and a forthcoming work \cite{FKMS} for more detail.

\medskip


The correspondence that is instrumental in deriving all the aforementioned results is not new; it was already implicitly used 
in \cite{Drutu}.
However, to the best of our knowledge, it has never been stated explicitly before.  
 We now describe this correspondence and introduce the main ideas behind our proofs. 
 Let $Q:\r^{n+2} \to \r$ be the quadratic form 
 given by \eq{q}{Q({\bf x}) = 
 \sum^{n+1}_{i=1} x_i^2- x_{n+2}^2\,.} Then one can embed $S^n$ into the lightcone 
 $$L:=\{{\bf x}\in \r^{n+2} : Q({\bf x})  = 0\}$$ 
 of $Q$ via $\alpha \mapsto ({\alpha},1)$. Under this embedding, each rational point $\pq \in S^n$ determines a line in $L$ and a unique primitive vector $({\bf p},q) \in \z^{n+1} \times \n$ lying on this line. By Lemma \ref{close vector} below, good approximants $\pq$ to $\alpha \in S^n$ correspond under this mapping to lattice points $({\bf p}, q) \in \z^{n+2} \cap L$ which are close to the line through $({\bf \alpha},1)$. 
 Note that we have changed our approximating points from a dense subset to a discrete one, which dynamics is better equipped to handle. Let $\Lambda_0 := \z^{n+2} \cap L$. 

Denote by $G$ the group $\SO(Q)$ of orientation-preserving linear transformations which preserve $Q$.
Let $r_\alpha \in G$ denote an element which preserves 
$\r^{n+1}\times\{0\}$
and sends $({\bf \alpha},1) $ to $(1,0,\dots,0,1) \in L$ -- such an element is not unique for $n > 1$,
see \S\ref{corr} for more details on the choice of $r_\alpha$.
Applying $r_\alpha$ to the lightcone $L$, we see that good approximants $({\bf p},q) \in \Lambda_0$ become points in $r_\alpha \Lambda_0$ which are close to the line through $(1,0,...,0,1)$. Let $g_t \in G$ be a flow which contracts this line exponentially and expands the line through $(-1,0,...,0,1)$ exponentially (see $\S 2.1$ for an explicit description of $g_t$). Then 
points in $r_\alpha \Lambda_0$ close to this line correspond to small vectors in the lattice $g_t r_\alpha \Lambda_0$ for some $t \geq 0$. This is the central idea of this chapter, and the precise {\it quantitative\/} nature of this correspondence is the subject of Lemmas \ref{small vector} and \ref{close vector} below. 

By a {\it lattice\/} in $L$  we will mean a set of the form $g\Lambda_0$ for some $g \in G$. Let $\Gamma$ denote the stabilizer of $\Lambda_0$ in $G$. Then $\Gamma$ is a lattice in $G$, containing the subgroup $\SO(Q)_\z$ of integer points of $G$ as a finite index subgroup.
The space of lattices in $L$ can be identified with 
$\mathcal{L} :=\ggm$, a \hs\ with finite $G$-invariant (Haar) measure.  
Also let us define a function $\omega$ on 
$\mathcal{L}$ by $$\omega(\Lambda) := \displaystyle\min_{0 \ne {\bf v} \in \Lambda} \left\|{\bf v} \right\|.$$

The correspondence between approximation and dynamics is described in the following theorem, 
which is a partial analogue of Theorem $8.5$ in \cite{KM}. 

\begin{Thm} \label{dictionary} Let $\phi: [x_0,\infty) \to (0,\infty)$ be a piecewise $C^1$ function 
satisfying \equ{extraregularity}. 
Put  \eq{t_0}{t_0 = \ln\left(\frac{2}{\sqrt{n+1}\phi(x_0)}\right)\,,} and define a function  $\rho:[t_0,\infty) \to (0,\infty)$ by \eq{r(t)}{
\rho(t) = e^{-t} \cdot \phi\inv\left(\frac{2}{\sqrt{n+1}e^t}\right).
}
Then 
$\rho$ is non-increasing and the following hold:
\begin{itemize}
\item[$\bullet$] If $\alpha \in A(\phi,S^n)$, then there exists a sequence $t_k \to \infty$ such that $\omega(g_{t_k} r_\alpha \Lambda_0) < 2\rho(t_k)$;
\item[$\bullet$] If there exists a sequence $t_k \to \infty$ such that $\omega(g_{t_k}r_\alpha \Lambda_0) < \rho(t_k)$, then $\alpha \in A(\sqrt{n+1}\phi,S^n)$.
\end{itemize}
\end{Thm}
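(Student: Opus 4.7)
The theorem sets up a quantitative dictionary between the approximation of $\alpha$ by points of $S^n \cap \q^{n+1}$ and cusp excursions of the orbit $\{g_t r_\alpha \Lambda_0\}$ in $\ggm$, and my plan is to verify it by a direct decomposition of vectors on the lightcone into the $g_t$-eigenbasis. The monotonicity of $\rho$ is the first step: \equ{extraregularity} forces $\phi$ itself to be non-increasing (since $(k+1)\phi(k+1) \le k\phi(k)$ gives $\phi(k+1)/\phi(k) \le k/(k+1) < 1$), so $q(t) := \phi\inv\bigl(2/(\sqrt{n+1}e^t)\bigr)$ is well-defined and non-decreasing, and $\rho(t) = e^{-t} q(t) = (\sqrt{n+1}/2)\cdot q(t)\phi(q(t))$ inherits the non-increasing property. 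Throughout I write $r_\alpha({\bf p}, q) = (a_1, a_2, \ldots, a_{n+1}, q)$ and work in the $g_t$-eigenbasis $\{e_+, e_2, \ldots, e_{n+1}, e_-\}$ with $e_\pm := (\pm 1, 0, \ldots, 0, 1)$: the components along $e_\pm$ are $c_\pm := (q \pm a_1)/2$ and are scaled by $e^{\mp t}$, the middle components are fixed, and the lightcone $Q({\bf p}, q) = 0$ reads $4 c_+ c_- = \sum_{i \ge 2} a_i^2$.

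For the forward direction, fix an approximant $({\bf p}, q)$ with $\|\alpha - {\bf p}/q\|_\infty < \phi(q)$ and set $e^t = 2/(\sqrt{n+1}\phi(q))$ so that $\rho(t) = q e^{-t}$. Since $r_\alpha$ is Euclidean-orthogonal, the sup-norm approximation yields the Euclidean bound $(q - a_1)^2 + \sum_{i \ge 2} a_i^2 < (n+1) q^2 \phi(q)^2$, and combined with the lightcone this forces $c_- = (q - a_1)/2 \le (n+1) q\phi(q)^2/\bigl(2(q + a_1)\bigr)$, small enough to absorb the $e^t$ expansion. A careful term-by-term computation of $g_t r_\alpha({\bf p}, q)$ in standard coordinates, exploiting the sign cancellation between $c_+ e^{-t}$ and $c_- e^t$ in the first coordinate, yields $\|g_t r_\alpha({\bf p}, q)\|_\infty < 2\rho(t)$, giving the desired small vector in $g_t r_\alpha \Lambda_0$. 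For irrational $\alpha$ the denominators diverge, so the corresponding $t_k$ tend to infinity.

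For the backward direction, take $v = ({\bf p}, q) \in \Lambda_0 \setminus \{0\}$ with $\|g_t r_\alpha v\|_\infty < \rho(t)$, WLOG $q > 0$ (the case $q = 0$ is excluded by the lightcone). Reading off the sum and difference of the first and last coordinates of $g_t r_\alpha v$ gives $|q + a_1| < 2\rho(t) e^t = 2q(t)$ and $|q - a_1| < 2\rho(t) e^{-t}$. The lightcone rearranges to the exact identity $\|{\bf p}/q - \alpha\|_2^2 = 2(q - a_1)/q$, so the bound on $q - a_1$, together with the comparison $q \lesssim q(t)$ (extracted from $|q + a_1| < 2q(t)$ and $a_1 \approx q$) and the monotonicity of $k\phi(k)$, produces $\|{\bf p}/q - \alpha\|_\infty \le \|{\bf p}/q - \alpha\|_2 < \sqrt{n+1}\,\phi(q)$, i.e., $\alpha \in A(\sqrt{n+1}\phi, S^n)$. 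That the $v_k$ along $t_k \to \infty$ give infinitely many distinct approximants follows because for irrational $\alpha$ no nonzero lattice vector is aligned with the contracting direction $e_+ = r_\alpha(\alpha, 1)$, so $\|g_t r_\alpha v\|_\infty \to \infty$ for each fixed $v$.

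The main obstacle is pinning the constants carefully: the naive triangle-inequality bound in the forward direction overshoots (to something like $3\rho(t)$), and one must exploit the full quadratic lightcone constraint rather than its coarser linear consequence to tame the $c_- e^t$ term and land strictly below $2\rho(t)$. The backward direction similarly relies on the lightcone identity $\|{\bf p}/q - \alpha\|_2^2 = 2(q-a_1)/q$ to upgrade a linear bound on $q - a_1$ into the quadratic improvement needed for the $\sqrt{n+1}\phi(q)$ conclusion, and requires a short argument to transfer $\phi(q(t))$-type estimates to $\phi(q)$-type estimates via the near-equality of $q$ and $q(t)$.
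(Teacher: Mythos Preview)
Your argument is correct and matches the paper's approach: the same eigendecomposition for $g_t$, the same choice $e^{t_q}=2/(\sqrt{n+1}\phi(q))$, the same lightcone identity $\|q\e_1-r_\alpha({\bf p},q)\|_e^2=2q(q-a_1)$, and the same monotonicity transfer $q\phi(q)\ge q(t)\phi(q(t))$ in the converse. Two simplifications you overlooked: the paper observes once and for all that for any $\x\in L$ one has $\|\x\|_\infty=|x_{n+2}|$, so only the last coordinate of $g_t r_\alpha({\bf p},q)$ ever needs bounding and your ``careful term-by-term'' computation with its feared overshoot to $3\rho(t)$ collapses to a one-line estimate; and in the converse the clean bound $q<e^t\rho(t)=q(t)$ follows immediately from $qe^{-t}\le\|g_t r_\alpha({\bf p},q)\|$ (minimizing $q\cosh t-a_1\sinh t$ over $|a_1|\le q$), which is tighter than what your additive extraction from $|q\pm a_1|$ gives and is exactly what the monotonicity step needs. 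The paper also disposes of rational $\alpha$ separately as a triviality in both directions, which you leave implicit.
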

 In other words, up to constant, $\alpha$ is $\phi$-approximable if and only if the orbit $g_t r_\alpha \Lambda_0$ hits the `shrinking target' parametrized by $\rho(t)$ infinitely often.

\subsection{Outline of the Paper}
In  \S\ref{corr} we analyze the quantitative nature of the correspondence between good approximants $\pq$ to $\alpha$ and lattices $g_t r_\alpha \Lambda_0$ with small vectors. This analysis culminates in the proof of Theorem \ref{dictionary} which allows us to change our perspective from approximations on $S^n$ to 
properties of trajectories on $\mathcal{L}$.  
In \S\ref{rt} we study the geometry of the space $\mathcal{L}$ by means of reduction theory, and prove a version of Mahler's compactness criterion (Corollary \ref{mahlers}), thus establishing that small values of the function $\omega$ correspond to complements of large compact subsets of $\mathcal{L}$. 

Then in \S\ref{proofs} we prove our main results. 
In \S\ref{dt} we combine the correspondence of \S\ref{corr} with Mahler's criterion  to prove Theorem  \ref{special dirichlet}, now reduced to a statement about lattices in $L$. In fact we  prove a stronger statement, Theorem \ref{true dirichlet}, which establishes the so called  uniform $(C,\frac{1}{2},\frac{1}{2})$-Dirichlet property of every  $\alpha \in S^n$. 
In \S\ref{bdd} we  
derive 
Theorem \ref{ba} from Dani's result on bounded geodesics on finite volume hyperbolic manifolds. 
In  \S\ref{kt} we recall the framework set forth by Kleinbock and Margulis in \cite{KM} to establish a Borel-Cantelli lemma about cuspidal penetrations. We conclude that the set of lattices whose trajectories penetrate a sequence of shrinking cuspidal neighborhoods infinitely often is either null or full depending on the convergence or divergence of the sum of measures of these neighborhoods. We then estimate these measures and, using the correspondence defined in Theorem \ref{dictionary}, relate their sum to the convergence or divergence of \equ{sum}. After that in \S\ref{jt} we recall the machinery of mass transference developed by \cite{BeresnevichVelani_mass_transference} to deduce Theorem \ref{h measures sphere} from Theorem \ref{khintchine}. 

Lastly in \S\ref{alt methods}, we discuss other techniques which can be used to  prove our theorems, and mention some generalizations which will appear in a forthcoming paper \cite{FKMS}.

\subsection{Acknowledgements}
The authors are grateful to Cornelia Drutu, Lior Fishman, Alex Gorodnik, Hee Oh, David Simmons, Sanju Velani, and Victor Beresnevich for many helpful discussions. The work of the first named author was supported in part by NSF grant DMS-1101320. 


\section{Good approximations and small vectors}\label{corr}

Let $\{{\bf u}_i \}$ denote the standard basis on $\r^{n+2}$  with respect to which $Q$ has the familiar form \equ{q}.
We will refer to the group of orientation-preserving linear transformations preserving $Q$ as $\SO(Q)$, and denote it by $G$. 

Let ${\bf e}_1$ denote the vector ${\bf u}_1 + \u_{n+2}$ and let $K\cong \SO(n+1)$ be the subgroup of $G$ preserving $ \operatorname{Span}(\u_1,\dots,\u_{n+1})$.
For $\alpha \in S^n \subset \operatorname{Span}(\u_1,\dots,\u_{n+1})$, we would like  to choose an element $r_\alpha \in K$ such that $r_\alpha \alpha = {\bf u}_1$, or, equivalently, $r_\alpha(\alpha,1) = {\bf e}_1$. As mentioned previously, for $n > 1$ such an element is not unique. However, if we map $K$ to $S^n$ via $g \mapsto g({\bf u_1})$, then the stabilizer of $\u_1$ in $K$ is isomorphic to $\SO(n)$, identified with the lower right $n \times n$ block of $\SO(n+1)$. Therefore 
there is a unique coset $r_\alpha\inv \SO(n)$ with the property that $g{\bf u}_1 = \alpha$ for any $g\in r_\alpha\inv \SO(n)$. Our first goal is to choose a particular section  $$S^n\cong\SO(n+1)/\SO(n) \to K\,.$$ 
Note that without loss of generality we can restrict our attention to an open neighborhood $W$ of the hemisphere of $S^n$ centered at ${\bf u}_1$, since the union of $W$ and its image under reflection covers $S^n$, and all the Diophantine properties we consider are invariant under reflection.

\ignore{
Now let us recall the structure of the Lie algebra $\mathfrak{g} = \mathfrak{so}(n+1,1)$. 
The Lie algebra $\mathfrak{k} := \mathfrak{so}(n+1)$ of $K$ consists of skew-symmetric matrices. Then $$\mathfrak{g} = \left\{ \begin{pmatrix} {\bf b} & {\bf x^T} \\ {\bf x} & 0 \end{pmatrix} : {\bf b} \in \mathfrak{k},\  {\bf x} \in \r^{n+1} \right\}.$$ Consider the $n$-dimensional subspace $\mathfrak{s}$ of $ \mathfrak{k}$ corresponding to
$${{\bf x } = {\bf 0}, \qquad {\bf b} = \begin{pmatrix} 0&{\bf y}\\{\bf -y^T} & 0 \end{pmatrix}, \ {\bf y} \in \r^n\,.}$$
that is, 
\eq{def s}{\mathfrak{s} = \left\{ {\bf s}_{\bf y} :=\begin{pmatrix} 0 & {\bf y} & 0 \\ {\bf -y^T} & 0 & 0\\ 0 & 0 & 0 \end{pmatrix}: {\bf y} \in \r^n \right\}.}

Then it is easy to see that for each $\alpha \in W$ there exists a unique element $g \in K$ such that $\ln(g) \in \mathfrak{s}$ and $g({\bf e}_1) = (\alpha,1)$.  We define $r_\alpha := g\inv$, so that  $r_\alpha(\alpha,1) = \e_1$. Furthermore, it is clear, and will be used later, that the map $\r^n\to W$ given by 
\eq{map}{{\bf y} \mapsto\big( \exp({\bf s}_{\bf y})\big)\inv({\bf e}_1)}
is bi-Lipschitz on the preimage of $W$ (here the latter is viewed as a subset of the hyperplane $\{x_{n+2} = 1\}$). }
\medskip

Let \eq{g_t}{g_t := \begin{pmatrix}  \cosh(t) & 0 & -\sinh(t)\\ 0 & I_n &0\\ -\sinh(t) & 0 & \cosh(t)\end{pmatrix} \in G\,,} 
and let  $$A := \{ g_t : t \in \r \}\,.$$
Then one easily checks: $$g_t \e_1 = \begin{pmatrix}  \cosh(t) & 0 & -\sinh(t)\\ 0 & I_n &0\\ -\sinh(t) & 0 & \cosh(t)\end{pmatrix} \begin{pmatrix} 1\\0\\ \vdots\\ 0 \\ 1 \end{pmatrix} = e^{-t} \e_1.$$
Let us also define the {\it horospherical subgroups\/} associated to $\{g_t\}$. These subgroups capture the dynamically significant behavior of the $g_t$-action. Namely:
\begin{itemize}
\item the {\it contracting} subgroup 	$U := \{ h \in G : g_t h g_{-t} \to e \text{ as } t \to \infty\}$;
\item the {\it neutral} subgroup $H^0 := \{ h \in G : g_t h = h g_t  \text{ for all } t\}$;
\item the {\it expanding} subgroup	$H := \{ h \in G: g_{-t} h g_t \to e \text{ as } t \to \infty\}$.
\end{itemize}

One knows that $G$ is locally a product of $U$, $H^0$ and $H$ (that is, the Lie algebra of $G$ is the direct sum of the Lie algebras of these three subgroups). Additionally, we recall the Iwasawa decomposition of $G$:
\begin{Thm} \label{iwasawa}
The mapping $U \times A \times K \to G$ is a diffeomorphism. 
\end{Thm}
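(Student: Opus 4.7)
My plan is to establish Theorem~\ref{iwasawa} as a concrete instance of the classical Iwasawa decomposition for the rank-one semisimple Lie group $G=\SO(n+1,1)$. The unifying geometric idea is that $G$ acts on the set of future null rays in $L$, which is identified with $S^n$ via the affine slice $\{x_{n+2}=1\}$; under this identification $K \cong \SO(n+1)$ acts as the rotation group on $S^n$ (hence transitively), while $UA$ is precisely the stabilizer of the distinguished null ray $\r_{>0}\e_1$. The argument then splits into three steps: surjectivity, injectivity, and smoothness.

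For surjectivity, given $g \in G$ I would use $K$-transitivity on null rays to produce $k \in K$ with $k \cdot \r_{>0}\e_1 = g \cdot \r_{>0}\e_1$. Then $k^{-1}g$ stabilizes this ray, and since $A$ scales $\e_1$ by $e^{-t}$ while $U$, being the unipotent radical of the parabolic stabilizing $\r\e_1$, fixes $\e_1$ pointwise, the stabilizer coincides with $UA$. Thus $g \in K\cdot UA$; inverting and using $(UA)^{-1} = AU = UA$ yields $G = UAK$. For injectivity I reduce to showing $UA \cap K = \{e\}$. An element of $UA$ has the form $ug_t$ with $u \in U$, and it sends $\e_1$ to $e^{-t}\e_1$, so its iterates send $\e_1$ to $e^{-nt}\e_1$; if this element lies in the compact group $K$, boundedness of the orbit forces $t=0$, and then $u \in U\cap K$ is trivial since a compact group contains no nontrivial unipotent elements.

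To upgrade the bijection to a diffeomorphism, the main input is that the differential of the multiplication map at the identity, namely the sum map $\mathfrak{u} \oplus \mathfrak{a} \oplus \mathfrak{k} \to \mathfrak{g}$, is a linear isomorphism; this is the Iwasawa decomposition at the Lie algebra level, which I would deduce from the restricted root space decomposition of $\mathfrak{so}(n+1,1)$ together with the Cartan involution, and verify via the dimension count $n + 1 + \binom{n+1}{2} = \binom{n+2}{2} = \dim \mathfrak{g}$. Equivariance of the multiplication map under left multiplication by $U$ and right multiplication by $K$ then propagates the local-diffeomorphism property from the identity, and a direct computation along $A$ (or the observation that the $K$-component of $g$ depends smoothly on $g \cdot \r_{>0}\e_1 \in S^n$) handles the variation in the $A$-direction. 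The main obstacle I expect is precisely the Lie-algebra Iwasawa decomposition at the identity; once that is in hand the remaining steps are essentially routine.
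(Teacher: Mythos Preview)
The paper does not prove Theorem~\ref{iwasawa}; it simply \emph{recalls} the Iwasawa decomposition of $G=\SO(n+1,1)$ as a classical fact and uses it as a black box. So there is no ``paper's own proof'' to compare against---your sketch is supplying an argument where the authors deliberately did not.

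That said, your outline is essentially the standard proof and is sound, with one small inaccuracy worth flagging. You assert that $UA$ is \emph{precisely} the stabilizer in $G$ of the ray $\r_{>0}\e_1$; in fact the stabilizer is the full minimal parabolic $MAU$, where $M\cong\SO(n)$ is the centralizer of $A$ in $K$ (the subgroup fixing both $\u_1$ and $\u_{n+2}$). This does not damage your surjectivity argument, since $M\subset K$ gives $K\cdot MAU=K\cdot AU$ and hence $G=KAU=(UAK)^{-1}=UAK$, but the statement as written is false. Your injectivity reduction to $UA\cap K=\{e\}$ is correct once one also uses that $A$ normalizes $U$ (so that $a_2^{-1}u_2^{-1}u_1a_1\in UA$), and your eigenvector argument for that intersection is fine. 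The smoothness step via the Lie-algebra decomposition and the dimension count is the standard route and is correctly stated.
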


The next lemma constructs a section $W\to K$ mentioned above:

\begin{Lemma}\label{biLip} There exist two bi-Lipschitz maps $W \to K$ and $W \to H$ which we will denote by $\alpha \mapsto r_\alpha$ and $\alpha \mapsto h_\alpha$, where $W \subset S^n$ is a neighborhood of the hemisphere containing ${\bf u}_1$, such that for any $\alpha\in S^n$ one has
\eq{ralpha}{r_\alpha(\alpha,1) = {\bf e}_1 } and
\eq{agrees}{
h_\alpha r_\alpha\inv  \in U 
H^0\,.}
\end{Lemma}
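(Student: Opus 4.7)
The plan is to construct $r_\alpha$ via an exponential parametrization coming from an $n$-dimensional subspace $\mathfrak{s}\subset\mathfrak{k}=\mathfrak{so}(n+1)$, and to extract $h_\alpha$ from the local factorization $G = H\cdot P$ (with $P := UH^0$) afforded by the splitting $\mathfrak{g} = \mathfrak{h}\oplus\mathfrak{u}\oplus\mathfrak{h}^0$. For $r_\alpha$, I would revive the subspace sketched in the commented-out paragraph: $\mathfrak{s}$ consists of the matrices ${\bf s}_{\bf y}$ coupling $\u_1$ to ${\bf y}\in\r^n$. A direct series computation (exploiting ${\bf s}_{\bf y}^3 = -|{\bf y}|^2 {\bf s}_{\bf y}$) yields $\exp({\bf s}_{\bf y})\e_1 = \bigl(\cos|{\bf y}|\cdot\u_1 - \tfrac{\sin|{\bf y}|}{|{\bf y}|}\sum_j y_j\u_{j+1},\, 1\bigr)$, identifying ${\bf y}\mapsto\exp({\bf s}_{\bf y})\e_1$ as the Riemannian exponential at $\u_1$ on $S^n$, which is a diffeomorphism from $\{|{\bf y}|<\pi\}\subset\r^n$ onto $(S^n\setminus\{-\u_1\})\times\{1\}$. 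Defining $r_\alpha := \exp({\bf s}_{\bf y})^{-1}$ for the unique preimage ${\bf y}$ of $\alpha$ makes \equ{ralpha} true by construction; restricting to any open $W$ that contains the closed hemisphere and avoids $-\u_1$ makes $\alpha\mapsto r_\alpha$ a bi-Lipschitz map into a compact subset of $K$. A useful byproduct to record is the identity $r_\alpha^{-1}\u_1 = \alpha$.

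For $h_\alpha$, the eigenspace decomposition of $\mathfrak{g}$ under $\mathrm{ad}\,\dot g_0$ (where $\dot g_0 := \frac{d}{dt}\big|_{t=0}g_t$) makes the product map $H\times P \to G$, $(h,p)\mapsto hp$, a local diffeomorphism at the identity, and since $H\cap P = \{e\}$ and everything is algebraic, it is in fact a diffeomorphism onto the open dense big Bruhat cell $HP$. A quick dynamical check shows $r_\alpha^{-1}\notin HP$ only if $r_\alpha(\u_1)=-\u_1$, i.e., $\alpha = -\u_1$; thus for $\alpha\in W$ one obtains a unique smooth factorization $r_\alpha^{-1} = h(\alpha)p(\alpha)$ with $h(\alpha)\in H$, $p(\alpha)\in UH^0$, and setting $h_\alpha := h(\alpha)^{-1}$ immediately gives $h_\alpha r_\alpha^{-1} = p(\alpha)\in UH^0$, verifying \equ{agrees}.

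Finally, to see $\alpha\mapsto h_\alpha$ is bi-Lipschitz, I would apply $r_\alpha^{-1} = h(\alpha)p(\alpha)$ to the null vector $(-\u_1,1)\in L$. Linearity of $r_\alpha^{-1}\in K$ together with $r_\alpha^{-1}\u_1 = \alpha$ gives $r_\alpha^{-1}(-\u_1,1) = (-\alpha,1)$. On the other hand $P = UH^0$ preserves the null direction through $(-\u_1,1) = -(\u_1-\u_{n+2})$ up to positive scaling ($U$ is unipotent and $H^0$ centralizes the diagonal $A = \{g_t\}$), so $p(\alpha)$ acts on $(-\u_1,1)$ by some $c>0$; projectivizing to $S^n$ then yields $h(\alpha)\cdot(-\u_1) = -\alpha$ in the conformal $H$-action. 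Since $H$ acts simply transitively on $S^n\setminus\{\u_1\}$ (a dimension count plus $H\cap\mathrm{Stab}(-\u_1) = H\cap P = \{e\}$), the orbit map $H\to S^n\setminus\{\u_1\}$ is a diffeomorphism, and under it $\alpha\mapsto h(\alpha)$ corresponds to the bi-Lipschitz reflection $\alpha\mapsto-\alpha$ on $W$; hence $\alpha\mapsto h_\alpha$ is bi-Lipschitz on $W$. The point requiring the most care is this unipotent-radical parametrization of the boundary sphere together with the positivity of $c$, both of which are standard features of rank-one symmetric space theory.
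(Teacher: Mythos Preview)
Your overall strategy is sound and in fact dual to the paper's: the paper first defines $h_\alpha$ via the orbit map $H\to S^n$, $h\mapsto h^{-1}\mathbf{e}_1$ (projected to the sphere), and then sets $r_\alpha = \sigma(h_\alpha)$ via the Iwasawa factor; you instead define $r_\alpha$ first via the exponential chart on $\mathfrak{s}\subset\mathfrak{k}$ and then extract $h_\alpha$ from the $HP$-factorization of $r_\alpha^{-1}$. Both routes are fine.

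However, the last paragraph contains a genuine error: you have swapped the roles of the two null directions. Computing with the Lie algebra $\mathfrak{u}$ given in the paper, one finds $\mathfrak{u}\cdot\mathbf{e}_1 = 0$ but $\mathfrak{u}\cdot(-\mathbf{u}_1,1) = (0,2\mathbf{x},0)\neq 0$; thus $P = UH^0$ stabilizes the line through $\mathbf{e}_1 = (\mathbf{u}_1,1)$, \emph{not} the line through $(-\mathbf{u}_1,1)$. Conversely, it is $H$ that fixes $(-\mathbf{u}_1,1)$ (check $\mathfrak{h}\cdot(-\mathbf{u}_1,1)=0$ from \equ{lieh}), so $H$ acts simply transitively on $S^n\setminus\{-\mathbf{u}_1\}$, not $S^n\setminus\{\mathbf{u}_1\}$. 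Your identity ``$H\cap\operatorname{Stab}(-\mathbf{u}_1)=H\cap P$'' is therefore wrong on both sides: $\operatorname{Stab}(-\mathbf{u}_1)$ is the opposite parabolic $HH^0$, and $H\cap HH^0 = H$.

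The repair is immediate: apply $r_\alpha^{-1} = h(\alpha)p(\alpha)$ to $\mathbf{e}_1$ rather than to $(-\mathbf{u}_1,1)$. Since $P$ stabilizes $[\mathbf{e}_1]$, one gets $p(\alpha)\mathbf{e}_1 = c\,\mathbf{e}_1$ with $c>0$, while $r_\alpha^{-1}\mathbf{e}_1 = (\alpha,1)$; hence in the boundary action $h(\alpha)\cdot\mathbf{u}_1 = \alpha$. The orbit map $H\to S^n\setminus\{-\mathbf{u}_1\}$, $h\mapsto h\cdot\mathbf{u}_1$, is a diffeomorphism (dimension count plus $H\cap P=\{e\}$), so $\alpha\mapsto h(\alpha)=h_\alpha^{-1}$ is bi-Lipschitz on $W$. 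This is exactly the orbit map the paper uses (their $h_\alpha^{-1}\mathbf{e}_1 = \mathbf{v}_{\mathbf{x}}$ projecting to $(\alpha,1)$), so after the correction your argument and the paper's coincide at this step.
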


\begin{proof} 
To prove the lemma we first need to better understand the structure of $H$, the  subgroup of $G$ whose Lie algebra is given by 
\eq{lieh}{\mathfrak{h} 
:= \left\{ \begin{pmatrix} 0 & -{\bf  x}^T & 0 \\ {\bf x} & 0 & {\bf x}\\ 0 & {\bf  x}^T & 0 \end{pmatrix}: {\bf x} \in \r^n \right\}\,.}
By Theorem \ref{iwasawa}, every element $h \in H$ can be uniquely represented as \eq{defsigma}{h = u g_s k\,,} where $u\in U$, $s\in \r$, $k\in K$. Let $\sigma: H \to K$ be the projection onto $K$, i.e. $\sigma(h) = k$, where $h$ and $k$ are as in  \equ{defsigma}. This mapping is injective: if we have two elements $h = u g_s k$ and $h' = u' g_t k$ for which $k = \sigma(h) = \sigma(h')$, then $$H \ni h' \cdot h\inv = u' g_{t} k \cdot k\inv g_{-s} u\inv =  u''g_{t-s} \in U A\subset UH^0 \,.$$ Since $H \cap UH^0 $ is trivial, we have $s = t$ and $u = u'$. Clearly $\sigma$ is locally bi-Lipschitz.

One readily checks that $U\bf e_1 = \bf e_1$ (indeed, a change of coordinates identifies $U$ as a subgroup of upper triangular matrices), and, as mentioned previously, $g_s{\bf e}_1 = e^{-s} {\bf e}_1$. Therefore, with $\sigma(h) = k$ one has $$h\inv{\bf e}_1 = k\inv g_{s}\inv u\inv {\bf e}_1 = k\inv g\inv_{s} {\bf e}_1 = e^{s} k\inv{\bf e}_1.$$ Since an element $h \in H$ is uniquely determined by the image of ${\bf e}_1$, it follows that the mapping $$H \to H{\bf e}_1, \qquad h \mapsto h\inv{\bf e}_1$$ is locally bi-Lipschitz. In this way we can view $H$ as an $n$-dimensional submanifold of the light cone $L$. 
Explicitly, using \equ{lieh} one can parametrize this embedding  as 
$$H{\bf e}_1 = \left\{ {\bf v}_{{\bf x}} := \begin{pmatrix} 1 - \left\| {\bf x} \right\|^2 \\ 2x_1\\ \vdots \\ 2x_n \\ 1 + \left\| {\bf x} \right\|^2 \end{pmatrix} \right\}.$$ Since we have an embedded copy of $S^n \subset L$ given by points whose last coordinate is $1$, we obtain a map $\pi: H{\bf e}_1 \to S^n$ given by linearly scaling ${\bf v}_{{\bf x}}$ by $1/(1 + \left\| {\bf x} \right\|^2)$. This map is locally bi-Lipschitz. 

We can now define the desired maps. Given $\alpha \in W$, there exists a unique $\bf x \in \r^n$ such that $\pi({\bf v}_{{\bf x}}) = (\alpha,1) \in S^n$. Now define $h_\alpha \in H$  such that $h_\alpha\inv{\bf e}_1 = {\bf v}_{{\bf x}}$, and let $r_\alpha := \sigma(h_\alpha)$.

Since both maps are bi-Lipschitz on $W$, it remains to show \equ{ralpha} and
\equ{agrees}. Note that $$r_\alpha\inv{\bf e}_1 = S^n \cap \langle h_\alpha\inv{\bf e}_1 \rangle = \pi({\bf v}_{{\bf x}}) = (\alpha,1)$$ as needed; and \equ{agrees} follows because, in view of \equ{defsigma}, $h_\alpha r_\alpha\inv \in U A \subset U  H^0$.\end{proof}

\ignore{Use Theorem \ref{iwasawa} to define a map $\sigma:H\to K$ by 
\eq{iwdecomp}{\sigma(h) = k^{-1} \text{ if } h = kan\,.}
Note that $\sigma$ is injective: if $h = kau, h' = k a'u' \in H$, where $a,a'\in A$, $u,u'\in U$ and $k\in K$, then $$H \ni h^{-1}h' = u^{-1} a^{-1} k^{-1} k a' u' = a'' u'' \in A  U \subset H^0 U\,.$$ But the intersection $H\cap AU$  is trivial, hence $a'' u'' = e$ and thus $a = a', u=u'$. Clearly $\sigma$ is also locally bi-Lipschitz onto its image (since the Iwasawa decomposion is a diffeomeorphism). 

Moreover, $$h({\bf e}_1) = kau({\bf e}_1) = e^{-s} k({\bf e}_1)\,,$$ where $h = ka_s u$. Therefore, the line $\langle h({\bf e}_1) \rangle $ coincides with $ \langle k({\bf e}_1) \rangle$. Since the $a$-term in \equ{iwdecomp} also depends on $h$ in a bi-Lipschitz manner, it follows that the map $$H^+ \to H^+({\bf e}_1)$$ is bi-Lipschitz (globally). A direct computation shows that $\{H^+({\bf e}_1)\} \subset L_Q$ is an $n$-dimensional submanifold parametrized by $$H^+({\bf e}_1) =\left\{  h_{{\bf x}} := \begin{pmatrix} 1 - \left\| {\bf x} \right\|^2\\ 2x_1\\ \vdots \\ 2x_n \\ 1 + \left\| {\bf x} \right\|^2 \end{pmatrix} : {\bf x} \in \mathbb{R}^n \right\}.$$ Since we have an embedding $S^n \subset L_Q$ as the level set ${\bf u_{n+2}} = 1$, we can map $$H^+({\bf e}_1) \to S^n, \qquad h_{{\bf x}} \mapsto s_{{\bf x}} := \begin{pmatrix} \frac{1 - \left\| {\bf x} \right\|^2}{1 + \left\| {\bf x} \right\|^2}\\ \frac{2x_1}{1 + \left\| {\bf x} \right\|^2}\\ \vdots \\ \frac{2x_n}{1 + \left\| {\bf x} \right\|^2}\\ 1 \end{pmatrix} \in S^n.$$

\ignore{Let $\mathfrak{h}$ denote the Lie algebra of $H$. A direct calculation shows that 
$$\mathfrak{h} 
:= \left\{ \begin{pmatrix} 0 & {\bf y} & 0 \\ {\bf -y^T} & 0 & {\bf -y^T}\\ 0 & {\bf -y} & 0 \end{pmatrix}: {\bf y} \in \r^n \right\}\,.$$
}

We can now define the desired mappings: Given $\alpha \in W$, there exists a unique ${\bf x} \in \mathbb{R}^n$ such that $s_{{\bf x}} = (\alpha, 1)$. Define $$\eta(\alpha) := h \in H^+ \text{ such that } h({\bf e}_1) = h_{{\bf x}}.$$ This mapping, restricted to $W$, is bi-Lipschitz, because it is the composition of bi-Lipschitz mappings: $(\alpha,1) = s_{{\bf x}}$, $s_{{\bf x}} \mapsto h_{{\bf x}}$ is stereographic projection, hence locally bi-Lipschitz. And the mapping $H^+({\bf e}_1) \to H^+$ is locally bi-Lipschitz, as follows from the Iwasawa decomposition. 

We can now define $\varphi(\alpha)$ via $\eta(\alpha)$: Define $$\varphi(\alpha) = \sigma\big(\eta(\alpha)\big)\,.$$ Since $\sigma$ was previously shown to be bi-Lipschitz, $\varphi$ is bi-Lipschitz on $W$, and clearly $$\varphi^{-1}(\alpha) \eta(\alpha) \in H^0 \cdot U\,,$$ since it contains only terms from $A$ and $U$. Finally we need to show that $\varphi(\alpha)(\alpha,1) = {\bf e}_1$. But from the construction, $$\varphi(\alpha)\inv({\bf e}_1) = S^n \cap \langle \eta(\alpha)({\bf e}_1) \rangle = s_{{\bf x}} = (\alpha,1).$$
}

\ignore{Now recall the subspace $\mathfrak{s}$ defined in \equ{def s}, and let $\pi: \mathfrak{h}^+ \to \mathfrak{s}$ be the projection ${\bf h}_{\bf y} \mapsto {\bf s}_{\bf y}$. We can extend this to a map $H^+ \to S^n$ in the obvious manner: for $h\in H^+$, consider
$$
\Phi(h) := \big( \exp(\pi\circ\ln(h)\big)\inv({\bf e}_1)\,.
$$

\begin{Lemma}\label{bi-Lip} The mapping $\Phi$ is bi-Lipschitz on the preimage of the hemisphere centered at ${\bf e}_1$. 
\end{Lemma}
\begin{proof} Since $H^+$ is nilpotent, the exponential map $\mathfrak{h}^+ \to H^+$ is polynomial, and hence bi-Lipschitz. The projection in the Lie algebra is a linear map on a vector space, and thus bi-Lipschitz. Finally, the identification $r_\alpha \mapsto \alpha$, see \equ{map}, is bi-Lipschitz on the preimage of $W$ as discussed in \S\ref{goodapp}.
\end{proof}

We will also need  the Iwasawa decomposition for $G$.
Let
\begin{itemize}
\item[$\bullet$] $K := \SO(n+1)$ be the group of Euclidean isometries preserving $Q$;
\item  $A := \{ g_t : t \in \r \}$, see \equ{g_t};
\item $U := $ \comm{??? need to choose $H^+$ vs.\ $H^-$}
\end{itemize}

One has:
\begin{Thm} \label{iwasawa}
The mapping $K \cdot A \cdot U \to G$ is a diffeomorphism. 
\end{Thm}

}


As mentioned in the introduction, the key idea behind our proofs is to restate the problem of approximating $\alpha \in S^n$ as a problem  of approximating the line 
 through $\e_1$
by the lattice $r_\alpha \Lambda_0 \in \mathcal{L}$. Applying the flow $g_t$ contracts $\e_1$, and by continuity, good approximants to this line will correspond, for some time $t \geq 0$, to short vectors. We now quantify that relationship.


\medskip

Our results will be stated with respect to the sup norm, but due to the definition of $L$, it is often more convenient to work with the Euclidean norm on $\r^{n+2}$, denoted by $\left\| \cdot \right\|_e$. By the equivalence of norms on $\r^m$, this changes the estimate only by a universal constant. Explicitly, $$\left\| {\bf x} \right\| \leq \left\| \x \right\|_e \leq \sqrt{m} \left\| \x \right\| \text{ for any } \x \in \r^m.$$ It is worth noting, however, that for points on $L$ we have a better approximation. If $\x \in L$, then by definition $$Q({\bf x}) = 0 \Leftrightarrow x^2_{n+2} = 
\displaystyle\sum^{n+1}_{i=1} x^2_i.$$ From this it immediately follows that $$\left\| \x \right\| = |x_{n+2}| \text{ and } \left\| \x \right\|_e = \sqrt{2} |x_{n+2}|.$$ We will use these estimates frequently in what follows. 

Accordingly, since we are interested in the norms of vectors in $L$ under $g_t$, we compute \eq{g_t height}{|(g_t \x)_{n+2}|= \frac{1}{2}\left| e^t(x_{n+2}-x_1) + e^{-t} (x_{n+2} + x_1)\right|.}

Finally, 
we will have frequent need for an estimate on the term $\left\| q\cdot {\bf e_1} - \rpq \right\|_e$. By definition, $$\left\| q\cdot \e_1 - \rpq \right\|_e^2 = (q - \rpq_1)^2 + \displaystyle\sum^{n+1}_{i=2} \rpq^2_i.$$ Since $\rpq \in L$, we have that $q^2 = \sum^{n+1}_{i=1} \rpq^2_i$.  Combining these estimates shows that $$\left\| q \cdot \e_1 - \rpq \right\|_e = \sqrt{2q(q-\rpq_1)}.$$ 

\medskip

We can now justify our remarks that good approximants correspond to small vectors. 

\begin{Lemma} \label{small vector} Let $N \geq q$ be such that $\left\| \alpha - \pq \right\| < \frac{\ve}{\sqrt{qN}} \leq \frac{\ve}{q}$. Then there exists $t > 0$ such that $\left\| g_{t} r_\alpha ({\bf p},q) \right\|  < \ve \sqrt{n+1} \sqrt{\frac{q}{N}} \leq \ve \sqrt{n+1}$.
\end{Lemma}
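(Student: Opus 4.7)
The plan is to exploit the identity $\|{\bf v}\| = |v_{n+2}|$ valid for every ${\bf v}\in L$; since $g_t\in G$ preserves $L$, it suffices to control the last coordinate. Writing $y_1 := \rpq_1$ for the first coordinate of $\rpq$, the formula \equ{g_t height} reduces the problem to producing a $t > 0$ with
$$|(g_t\rpq)_{n+2}| = \tfrac{1}{2}\bigl(e^t(q - y_1) + e^{-t}(q + y_1)\bigr) < \ve\sqrt{n+1}\sqrt{q/N}.$$

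The first step would be to translate the approximation hypothesis into a bound on $q - y_1$. Since $r_\alpha \in K$ acts as an orthogonal transformation of $\r^{n+2}$ fixing the last coordinate, one has $(\rpq)_{n+2} = q$ and, using $r_\alpha(q\alpha,q) = q\e_1$,
$$\|q\e_1 - \rpq\|_e = \|q\alpha - {\bf p}\|_e \le \sqrt{n+1}\,\|q\alpha - {\bf p}\| < \sqrt{n+1}\,\ve\sqrt{q/N}.$$
Combining this with the previously recorded identity $\|q\e_1 - \rpq\|_e = \sqrt{2q(q-y_1)}$ yields the key bound
$$q - y_1 < \frac{(n+1)\ve^2}{2N}.$$
Moreover, since $\rpq \in L$ the first $n+1$ coordinates of $\rpq$ have squared Euclidean sum equal to $q^2$; hence $|y_1| \le q$, so both $q - y_1 \ge 0$ and $q + y_1 \le 2q$.

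I would then choose $t$ to minimize the displayed expression, namely $t = \tfrac12 \ln\bigl(\tfrac{q+y_1}{q-y_1}\bigr)$, which is positive precisely when $y_1 > 0$. At this $t$,
$$\tfrac{1}{2}\bigl(e^t(q-y_1) + e^{-t}(q+y_1)\bigr) = \sqrt{(q-y_1)(q+y_1)} \le \sqrt{2q(q-y_1)} < \ve\sqrt{n+1}\sqrt{q/N},$$
which gives the result. The only delicate point is the degenerate case $y_1 \le 0$: there $q - y_1 \ge q$ together with the key bound force $q < (n+1)\ve^2/(2N)$, and hence $q < \ve\sqrt{n+1}\sqrt{q/N}$, so already the value at $t=0$ (which is $q$) lies below the target, and continuity supplies a sufficiently small $t > 0$ that still works. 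This handling of the sign of $y_1$ to guarantee $t > 0$ is the only subtlety I expect; everything else is a direct computation once the identity $\|\cdot\| = |\cdot_{n+2}|$ on $L$ is used.
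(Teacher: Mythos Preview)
Your argument is correct and follows essentially the same route as the paper: translate the hypothesis into the bound $\|q\e_1 - \rpq\|_e < \ve\sqrt{n+1}\sqrt{q/N}$, use the identity $\|q\e_1 - \rpq\|_e = \sqrt{2q(q-y_1)}$, and evaluate $|(g_t\rpq)_{n+2}|$ at the minimizing time $t_* = \tfrac12\ln\bigl(\tfrac{q+y_1}{q-y_1}\bigr)$ to get $\sqrt{q^2-y_1^2}\le\sqrt{2q(q-y_1)}$. The only difference is in the edge cases: the paper separates off the rational case $\alpha = \pq$ (equivalently $q-y_1=0$, where $t_*$ is undefined but $g_t\rpq = qe^{-t}\e_1\to 0$), which you should also mention; conversely, you are more careful than the paper in explicitly checking that $t_*>0$, handling $y_1\le 0$ by a direct estimate at small $t$.
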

\begin{proof} By our computations above, we have that because $\left\| \alpha - \pq \right\| < \frac{\ve}{\sqrt{qN}}$, the same is true of the Euclidean norm up to a factor, namely $$\left\| \alpha - \pq \right\|_e < \sqrt{n+1} \frac{\ve}{\sqrt{qN}}\,.$$ 
Multiplying both sides by $q$ and noting that $r_\alpha$ is a Euclidean isometry, we have $$\left\| q \e_1 - \rpq \right\|_e < \ve\sqrt{n+1} \sqrt{\frac{q}{N}}\,.$$ Now observe that if $\alpha - \pq  = 0$, then $g_t \rpq = g_tq\e_1\to 0$ as $t\to\infty$, so the conclusion of the lemma holds trivially. Otherwise, let $t_*$ be the unique point in time when the 
distance from $g_t \rpq$ to the origin is minimized  --  explicitly, this occurs at \eq{tstar}{t_* = \frac{1}{2}\ln\left(\frac{q+\rpq_1}{q-\rpq_1}\right).} For $t = t_*$, we compute 
\begin{eqnarray*}
\left\| g_{t_*} \rpq \right\| &=& |\big(g_{t_*} \rpq\big)_{n+2}| = \sqrt{q^2 - \rpq_1^2}\\ &\leq& \sqrt{2q\big(q-\rpq_1\big)} =  \left\| q{\bf e_1} - r_\alpha({\bf p},q)  \right\|_e\\ &<&  \ve\sqrt{n+1} \sqrt{\frac{q}{N}} \leq  \ve\sqrt{n+1}. 
\end{eqnarray*} \end{proof}

\begin{Lemma} \label{close vector} If for some $t > 0$, $\left\| g_t r_\alpha ({\bf p},q) \right\| < \delta$, then there exists an $N > q$ such that $\left\| \alpha - \pq \right\| < \frac{2\delta}{\sqrt{qN}}$.
\end{Lemma}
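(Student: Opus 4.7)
The lemma is the converse of Lemma \ref{small vector}, so my plan is to reverse that computation. Writing $v = r_\alpha({\bf p},q)$ and using that $r_\alpha \in K$ fixes $\u_{n+2}$ and that $v \in L$, I have $v_{n+2} = q$ and $|v_1| \leq q$, so in particular $q - v_1, q + v_1 \geq 0$. The identity $\|q\alpha - {\bf p}\|_e = \|q\e_1 - v\|_e = \sqrt{2q(q-v_1)}$ derived in the introductory computations reduces everything to an upper bound on $q - v_1$ of order $\delta^2/q$.

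To extract such a bound, I plan to apply the AM--GM inequality to formula \equ{g_t height}:
\[
\delta > \|g_t v\| = \tfrac12\bigl(e^t(q-v_1) + e^{-t}(q+v_1)\bigr) \geq \sqrt{(q-v_1)(q+v_1)} = \sqrt{q^2 - v_1^2},
\]
which yields $q^2 - v_1^2 < \delta^2$. In the main case $v_1 \geq 0$, dividing by $q + v_1 \geq q$ gives $q - v_1 < \delta^2/q$, and then $\|\alpha - \pq\| \leq \|\alpha - \pq\|_e = \sqrt{2q(q-v_1)}/q < \sqrt{2}\,\delta/q < 2\delta/q$.

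The main obstacle is the case $v_1 < 0$, where the bound $q - v_1 < \delta^2/q$ can fail. The saving observation is that if $v_1 \leq 0$, then the derivative of $(g_t v)_{n+2} = \tfrac12\bigl(e^t(q-v_1) + e^{-t}(q+v_1)\bigr)$ with respect to $t$ is non-negative on $[0,\infty)$ (since $e^t(q-v_1) \geq e^{-t}(q-v_1) \geq e^{-t}(q+v_1)$), so $\|g_t v\| \geq \|v\| = q$ for every $t \geq 0$. Together with the hypothesis, this forces $q < \delta$, and then the crude spherical bound $\|\alpha - \pq\| \leq 2 < 2\delta/q$ suffices.

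Finally, once $\|\alpha - \pq\| < 2\delta/q$ is established in both cases, the choice of $N$ is automatic: the strict inequality is equivalent to $q < 4\delta^2/\bigl(q\|\alpha-\pq\|^2\bigr)$, so the interval $\bigl(q,\, 4\delta^2/(q\|\alpha-\pq\|^2)\bigr)$ is non-empty, and any $N$ inside it satisfies both $N > q$ and $\|\alpha - \pq\| < 2\delta/\sqrt{qN}$, completing the proof.
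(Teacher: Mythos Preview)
Your proof is correct, but it takes a genuinely different route from the paper's. The paper sets $N = e^t\delta$ explicitly and uses only one term of \equ{g_t height}, namely $\tfrac12 e^t(q-v_1) \le \|g_t v\| < \delta$, to get $q-v_1 < 2e^{-t}\delta = 2\delta^2/N$; plugging into $\|\alpha-\pq\| \le \sqrt{2q(q-v_1)}/q$ gives the bound $2\delta/\sqrt{qN}$ directly, and $q<N$ follows from $\|g_t v\|\ge e^{-t}q$. By contrast, your AM--GM step amounts to minimizing $\|g_t v\|$ over $t$ (i.e.\ evaluating at $t_*$ as in Lemma~\ref{small vector}), which discards the specific $t$ in the hypothesis; you then need a sign dichotomy on $v_1$ and finally manufacture an abstract $N$ from the already-established inequality $\|\alpha-\pq\|<2\delta/q$. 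The paper's argument is shorter (no case split) and, more importantly, it produces the concrete value $N=e^t\delta$; this is exactly what is needed in the proof of Theorem~\ref{true dirichlet}, where $t$ is chosen so that $e^t\delta$ equals a prescribed height bound. Your $N$, while formally sufficient for the lemma as stated, depends on $\|\alpha-\pq\|$ and carries no information about $t$, so you would have to rework the later application. (A minor nitpick: your final step ``equivalent to $q<4\delta^2/(q\|\alpha-\pq\|^2)$'' tacitly assumes $\alpha\ne\pq$; when they coincide any $N>q$ works trivially.)
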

\begin{proof} 
If we set $N = e^t \delta$,  then we must have $q < N$ (this comes from comparing the norms of $g_t \rpq$ and $g_t q \e_1$). By the chain of inequalities $$\left\| \alpha - \pq \right\| \leq \left\| \alpha - \pq \right\|_e  = \frac{1}{q} \left\| q \e_1 - \rpq \right\|_e = \frac{1}{q} \sqrt{2q\big(q - \rpq_1\big)}\,,$$ it suffices to estimate the term $q-\rpq_1$. But by \equ{g_t height}, $$\frac{1}{2}e^t\big(q-\rpq_1\big) \leq |g_t \rpq_{n+2}|< \delta,$$ from which it immediately follows that $$q-\rpq_1 < 2e^{-t}\delta = \frac{2\delta^2}{N}.$$ Plugging this estimate back into the above, we obtain $$\left\| \alpha - \pq \right\| < \frac{1}{q}\sqrt{2q\big(q-\rpq_1\big)} < \frac{1}{q} \sqrt{2q\frac{2\delta^2}{N}} = \frac{2\delta}{\sqrt{qN}},$$ as needed.
\end{proof}
 


Given the above results, we have that a specific approximant $\pq$ satisfying \equ{phia} corresponds to a time $t_*$ when $$\omega(g_{t_*} r_\alpha \Lambda_0) < \sqrt{n+1} q\phi(q)\,,$$ and conversely if $\left\| g_t \rpq\right\| < q\phi(q)$, then $\left\| \alpha - \pq \right\| < 2 \phi(q)$. Moreover, if $\alpha \notin 
\q^{n+1}$, then $r_\alpha \Lambda_0 \cap \lb \e_1 \rb = \varnothing$. The significance of this trivial observation is that, whenever $\alpha$ is irrational,  for every element $({\bf p},q) \in \Lambda_0$ one has $$\left\| g_t r_\alpha ({\bf p},q) \right\| \to \infty \text{ as } t \to \infty.$$ In particular if $\alpha \notin 
\q^{n+1}$ and $\phi$ is decreasing, then any given approximant $\pq$ only works for a bounded length of time. 

It therefore seems reasonable to try and define a non-increasing function $\rho(t)$ with the property that $$\rho(t) = q \cdot \phi(q),$$ where 
$t$ is such that $g_t r_\alpha ({\bf p},q)$ is closest to the origin.
Indeed, this almost works except that $t_*$ in \equ{tstar}
depends on all the coordinates of $({\bf p},q)$ as well as on $\alpha$, not just on $q$. Our goal now is to approximate $t_*$ by a value of $t$ depending only on $q$. By our previous estimates on the Euclidean norm, if $\pq$ and $\alpha$ satisfy \equ{phia}, then  \eq{estimate}{(n+1)q^2 \phi(q)^2  > 2q\big(q-\rpq_1\big)\,.} 

Define $t_q = \ln\left(\frac{2}{\sqrt{n+1} \phi(q)}\right)$, and then define $\rho(t)$ such that $\frac{2}{\sqrt{n+1}} \rho(t_q) = q   \phi(q)$. This gives rise precisely to the expression 
\equ{r(t)}.
Clearly, if $\phi(x)$ is defined on $[x_0,\infty)$, then $\rho(t)$ is defined on $[t_0,\infty)$, where $t_0$ is given by \equ{t_0}.

\medskip

 We can now prove Theorem \ref{dictionary}.
\begin{proof}[Proof of Theorem \ref{dictionary}]
Let us first address the case of $\alpha \in \q^{n+1}\cap S^n$, say $\alpha = \pq$. As mentioned before, $\pq$ is $\phi$-approximable in $S^n$ for any positive function $\phi$, so it remains to show that there is an unbounded sequence $t_k$ such that $\omega(g_{t_k} r_{{\bf p}/q}\Lambda_0) < \rho(t_k)$. In fact, we will show this estimate holds for all $t$ sufficiently large, with $\pq$ as its own approximant:
\begin{eqnarray*}
\omega(g_{t} r_{{\bf p}/q}\Lambda_0) &\leq& \left\| g_t r_{{\bf p}/q} ({\bf p},q) \right\|\\ &=& \left\| g_t (q,0,...,0,q) \right\|\\ &=& q \cdot e^{-t} \\ &<& e^{-t} \phi\inv\left(\frac{2}{\sqrt{n+1}e^t}\right)\\ &=& \rho(t),
\end{eqnarray*}
where these inequalities hold whenever $\phi\inv\left(\frac{2}{\sqrt{n+1}e^{t}}\right) > q$, i.e. $t > \ln\left(\frac{2}{\sqrt{n+1}\phi(q)}\right)$. 

\medskip
Now suppose that $\alpha \in S^n$ is irrational and $\phi$-approximable  in $S^n$, and let $\pq \in S^n$ satisfy \equ{phia}. We will show that $\left\| g_{t_q} r_\alpha ({\bf p},q) \right\| < 2 \rho(t_q)$: 
\begin{eqnarray*}\left\| g_{t_q} r_\alpha ({\bf p},q) \right\| &=& \frac{1}{2}\left(e^{t_q} (q -\rpq_1) + e^{-t_q} (q + \rpq_1)\right) \\ &\leq& \frac{1}{2}\left( \frac{2}{\sqrt{n+1}\phi(q)} \frac{(n+1) q \phi(q)^2}{2} + \frac{\sqrt{n+1}\phi(q)}{2} 2q\right)\\ &=& \sqrt{n+1} q \phi(q)\\ &=& 2 \rho(t_q).
\end{eqnarray*}
Note that our use of \equ{estimate} 
is legitimate since our assumption on $\pq$ and $\alpha$ implies that $\left\| q \e_1 - r_\alpha ({\bf p},q) \right\|_e $ is less than $\sqrt{n+1} q \phi(q)$. 

Conversely, suppose that the lattice $g_t r_\alpha \Lambda_0$ contains a vector of length less than $\rho(t)$, and let $g_t \rpq$ be such a vector. First note that we must have $$q \leq e^t \rho(t)$$ (this follows from comparing the norm of $g_t r_\alpha ({\bf p},q)$ with that of $g_t (e^t \rho(t) \cdot \e_1)$, and noting that the norm of $g_t (e^t \rho(t) \cdot \e_1)$ is precisely $\rho(t)$). Furthermore, by Lemma \ref{close vector}, we have that $$\left\| \alpha - \pq \right\| < \frac{2\rho(t)}{q}\,.$$ So it suffices to prove that $\rho(t) \leq q \phi(q)$. But $$q \leq e^t \rho(t) = \phi\inv\left( \frac{2}{\sqrt{n+1}e^{t}}\right).$$ Let $s = \phi\inv\left(  \frac{2}{\sqrt{n+1}e^{t}}\right)$. Since the function $x \mapsto x  \phi(x)$ is assumed to be non-increasing, we have 
\begin{eqnarray*}
q\cdot \phi(q) \geq& s \phi(s) =  \frac{2}{\sqrt{n+1}e^{t}} \phi\inv\left( \frac{2}{\sqrt{n+1}e^{t}}\right)= \frac{2}{\sqrt{n+1}} \rho(t),
\end{eqnarray*}so that $\left\| \alpha - \pq \right\| < \frac{2\rho(t)}{q} \leq \sqrt{n+1} \phi(q)$. 

In fact this last argument shows that the function $\rho(t)$ is non-increasing: let $t < t'$, we claim that $\rho(t) \geq \rho(t')$. Indeed, $\frac{2}{\sqrt{n+1}e^{t}} > \frac{2}{\sqrt{n+1}e^{t'}}$, and since $\phi$ is decreasing, $$s = \phi\inv\left(\frac{2}{\sqrt{n+1}e^{t}}\right) < \phi\inv\left(\frac{2}{\sqrt{n+1}e^{t'}}\right) = s'\,.$$ Since $x \mapsto x\phi(x)$ is non-increasing, we have $s \phi(s) \geq s'\phi(s')$, which immediately yields $\rho(t) \geq \rho(t')$ as needed.

We observed previously that for every $({\bf p},q) \in \Lambda_0$, $\left\| g_t r_\alpha ({\bf p},q) \right\| \to \infty$ as $t$ increases, and therefore each $\pq$ works only for a bounded length of time. Since the sequence $t_k$ is unbounded, there must be infinitely many distinct approximants, i.e. $\alpha \in A(\sqrt{n+1}\phi, S^n)$. 
\end{proof}

\section{Reduction theory}\label{rt}

For the proof of Theorem \ref{dictionary} we also need some background in reduction theory for $\ggm$.
We will use a rough fundamental domain for the action of $\Gamma$ on $G$ in terms of the Iwasawa decomposition (Theorem \ref{iwasawa}). For $\tau \in\r_+$, let 
${A_\tau := \{ g_s : s \geq -\ln(\tau) \}}$, and define  a {\it Siegel set\/} to be  a set of the form 
$${\mathfrak{S}_{\tau,M} = K A_\tau  M},$$ where $A_\tau$ is as above and $M \subset U$ is relatively compact. 

The following theorem shows that finitely many translates of some Siegel set give a rough fundamental domain for $\Gamma$:
\begin{Thm}[\cite{Bo}, \S 13; see also \cite{L}, Proposition 2.2]
\label{siegel}
There exists a Siegel set $\mathfrak{S} = \mathfrak{S}_{\tau,M}$ and a finite set $F = \{f_1,...,f_m\} \subset G \cap \SL_{n+2}(\q)$ such that the union $\Omega := \bigcup^m_{i=1} \mathfrak{S} f_i$ satisfies
\begin{enumerate}
\item $G = \Omega \Gamma$;
\item for any $f \in G \cap \SL_{n+2}(\q)$, the set $\{\gamma \in \Gamma : \Omega f \cap \Omega \gamma\ne\varnothing\}$ is finite.
\end{enumerate}
\end{Thm}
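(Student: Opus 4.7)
The statement is attributed to Borel's reduction theory, so the plan is not to reprove it from scratch but to indicate how it is packaged from the standard ingredients.

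The overall strategy is to reduce to the arithmetic group $\SO(Q)_\z$ and then invoke Borel--Harish-Chandra reduction theory for the $\q$-group $G = \SO(Q)$. Since the form $Q$ in \equ{q} is isotropic over $\q$ (for instance $\u_1-\u_{n+2}$ is an isotropic vector), $G$ is a $\q$-group of $\q$-rank one, which is the simplest setting in which this machinery applies. The minimal $\q$-parabolic $P$ is the stabilizer of the isotropic line $\lb\e_1\rb$, and its Langlands decomposition over $\r$ is $P = U A H^0$, matching the horospherical subgroups already introduced before Lemma \ref{biLip}. The Iwasawa decomposition of Theorem \ref{iwasawa} then expresses $G = K A U$, so Siegel sets $\mathfrak{S}_{\tau,M} = K A_\tau M$ are the rank-one specialization of the general notion.

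The first step is to observe that $\Gamma$ has finite index in $\SO(Q)_\z$, so it suffices to produce $\mathfrak{S}$ and $F$ as claimed for the congruence group $\SO(Q)_\z$ (enlarging $F$ by a set of coset representatives then handles $\Gamma$ itself). The second step is to apply the Borel--Harish-Chandra finiteness theorem: for any $\q$-group $G$ there exist a Siegel set $\mathfrak{S}$ relative to a fixed minimal $\q$-parabolic and a finite set of rational elements $f_1,\dots,f_m$ whose Siegel translates cover $G$ modulo $G(\z)$. In the $\q$-rank one case, the $f_i$'s correspond to the finitely many $\Gamma$-orbits of $\q$-isotropic lines for $Q$, i.e.\ the cusps of $\ggm$; finiteness of this set is the classical number-theoretic fact that $Q$ has only finitely many equivalence classes of primitive isotropic vectors. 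This gives assertion (1).

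Assertion (2) is the Siegel property: for any rational $f \in G \cap \SL_{n+2}(\q)$, only finitely many $\Gamma$-translates of $\Omega$ meet $\Omega f$. The plan is to deduce this from the classical estimate that for $g \in \mathfrak{S}$ the entries of $g$ (in any fixed rational representation of $G$) are bounded below by a constant times the $A$-coordinate, and above by its inverse times a polynomial, so that an equality $s_1 f = s_2 \gamma$ with $s_1,s_2 \in \mathfrak{S}$ forces the $\gamma$-entries to lie in a bounded set; since $\gamma \in \SL_{n+2}(\z)$ has integer entries after rescaling by a common denominator of $f$, this leaves only finitely many possibilities. The step most prone to technical difficulty is making sure the finite set $F$ is chosen compatibly in both conclusions (1) and (2) at once — this is exactly the content of the references \cite{Bo}, \S 13 and \cite{L}, Proposition 2.2, and it is here that I would simply quote their formulation rather than redo the bookkeeping of Siegel's lemma.
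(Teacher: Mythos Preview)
The paper does not prove this theorem at all: it is stated as a quotation from the literature, with the citation \cite{Bo}, \S 13 (and \cite{L}, Proposition 2.2) given in the theorem header itself, and the text moves on immediately afterward. So there is no ``paper's own proof'' to compare against; your sketch is already more than what the paper offers, and as an outline of Borel--Harish-Chandra reduction theory in $\q$-rank one it is accurate in its essentials.

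One small correction: you write that $\Gamma$ has finite index in $\SO(Q)_\z$, but the paper has it the other way around --- $\Gamma$ is defined as the stabilizer of $\Lambda_0 = \z^{n+2}\cap L$, and contains $\SO(Q)_\z$ as a finite-index subgroup. This does not damage your argument, since the two groups are commensurable and reduction theory is insensitive to passage to commensurable arithmetic subgroups; in fact the direction of containment makes (1) easier (a Siegel domain for the smaller group $\SO(Q)_\z$ is automatically one for $\Gamma$), while (2) follows because $\Gamma$ is a finite union of $\SO(Q)_\z$-cosets. Also, in your Langlands decomposition $P = U A H^0$ the factor $A$ is redundant since $A \subset H^0$; the standard form is $P = H^0 U$ (or $MAU$ with $H^0 = MA$).
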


Our next goal is to relate the function $\omega$ 
to a metric on $\mathcal{L}$.
Actually, it will be more convenient to do it through the function $\Delta:\mathcal{L}\to\r$ given by  \eq{delta}{\Delta (\Lambda) := -\ln \omega(\Lambda)\,.}
 Choose a right-invariant and bi-$K$-invariant metric `$\operatorname{dist}_G$' on $G$, normalized so that $\operatorname{dist}_G(g_s,g_t) = |s-t|$. Also denote by `dist'   the induced metric on $\mathcal{L}= \ggm$, namely, define
$$\operatorname{dist}(g\Lambda_0, h\Lambda_0) :=\inf_{\gamma\in\Gamma}\operatorname{dist}_G(g,h\gamma)\,.
$$
Clearly one has $\operatorname{dist}(g\Lambda_0, h\Lambda_0) \le \operatorname{dist}_G(g,h)$.
A partial converse, where $g,h$ are taken from a Siegel set, is known as Siegel's conjecture, proved for $G = \SO(n+1,1)$ by Borel\footnote{Borel's proof is known to be incomplete for groups of higher rank, see \cite[Remark 5.6]{L}; however since $G$ is of real rank one, it is sufficient for our situation so we cite his result. See \cite[Theorem 7.6]{J} and \cite[Theorem 5.7]{L} for the correct proof of the general case. } 
\cite[Theorem C]{Bo}:

\begin{Thm}
\label{sigelconj} Le $ \mathfrak{S}$ and $F$ be as in Theorem \ref{siegel}. Then there exists a constant $D > 0$ such that for each $f\in F$, any $g \in  \mathfrak{S}f$ and any $\gamma \in \Gamma$, 
$$
\operatorname{dist}_G(e,g) - D \le \operatorname{dist}(g\Lambda_0,\Lambda_0)\le \operatorname{dist}_G(e,g) \,.
$$
\end{Thm}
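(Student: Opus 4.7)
The plan is to handle the two inequalities separately. The upper bound is immediate: setting $\gamma = e$ in the defining infimum gives $\operatorname{dist}(g\Lambda_0, \Lambda_0) \le \operatorname{dist}_G(g,e)$, so all the content lies in the lower bound.

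My strategy for the lower bound is to show that any $\gamma \in \Gamma$ attaining (or nearly attaining) the infimum $\inf_{\gamma' \in \Gamma} \operatorname{dist}_G(g, \gamma')$ must itself lie in a bounded neighborhood of the identity, of some universal radius $D$. Once this is proved, the triangle inequality $\operatorname{dist}_G(e,g) \le \operatorname{dist}_G(e, \gamma) + \operatorname{dist}_G(\gamma, g)$ yields $\operatorname{dist}(g\Lambda_0, \Lambda_0) \ge \operatorname{dist}_G(e,g) - D$ directly. Because $F$ is finite, after inflating $D$ by $\max_i \operatorname{dist}_G(e, f_i)$, one may as well reduce to the case $f = e$ and $g \in \mathfrak{S}$; and we may restrict attention to $\gamma$ with $\operatorname{dist}_G(g, \gamma) \le \operatorname{dist}_G(e, g)$, since otherwise the asserted bound holds trivially.

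To bound such a $\gamma$, I would combine the two parts of Theorem~\ref{siegel}. Set $g' := g\gamma^{-1}$, so $\operatorname{dist}_G(e, g') = \operatorname{dist}_G(g, \gamma)$ by right-invariance. By part (1), there exist $f_i \in F$ and $\gamma' \in \Gamma$ with $g'(\gamma')^{-1} \in \mathfrak{S}f_i \subseteq \Omega$. Writing $g = g'(\gamma')^{-1} \cdot (\gamma'\gamma) \in \Omega \cdot (\gamma'\gamma)$ and using $g \in \Omega f$, one concludes $\Omega f \cap \Omega(\gamma'\gamma) \ne \varnothing$; by part (2), $\gamma'\gamma$ then lies in a finite subset of $\Gamma$ depending only on $f$. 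The remaining task is to bound $\gamma'$: since $g'$ lies in a ball of known radius about $e$ and $g'(\gamma')^{-1}$ lies in $\mathfrak{S}f_i$ with Iwasawa coordinates forced to match those of $g'$ up to a bounded perturbation, right-invariance of $\operatorname{dist}_G$ together with the compactness of $K$ and of $M \subset U$ provides a uniform bound on $\operatorname{dist}_G(e, \gamma')$ in terms of the compact data of $\mathfrak{S}$ and $F$. Hence $\gamma = (\gamma')^{-1} \cdot (\gamma'\gamma)$ belongs to a finite subset of $\Gamma$, whose $\operatorname{dist}_G(e,\cdot)$-diameter serves as the desired $D$.

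The main obstacle is exactly this last step—bounding $\gamma'$. It is a quantitative form of the statement that the projection $\mathfrak{S} \to G/\Gamma$ has uniformly bounded fibers, and in practice requires a cusp-by-cusp analysis of the finite-volume hyperbolic orbifold $\Gamma\backslash G/K$. In the real-rank-one setting this is what Borel establishes in \cite[Theorem~C]{Bo}, and is what the paper's citation relies on; in higher rank the claim is more delicate, as the footnote observes.
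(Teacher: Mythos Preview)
The paper does not give its own proof of this theorem; it is simply cited from Borel \cite[Theorem C]{Bo}. So there is no argument in the paper to compare against, only a black-box reference.

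Your outline correctly isolates the trivial upper bound and attempts a reduction for the lower bound via the Siegel finiteness property (part (2) of Theorem~\ref{siegel}). The step showing that $\gamma'\gamma$ lies in a fixed finite set is fine. The genuine gap is the bound on $\gamma'$. Your justification (``$g'$ lies in a ball of known radius about $e$ \dots\ Iwasawa coordinates forced to match \dots'') does not work: the radius in question is $\operatorname{dist}_G(e,g') \le \operatorname{dist}_G(e,g)$, which is \emph{not} uniformly bounded as $g$ ranges over $\mathfrak{S}f$, and there is no mechanism here that pins down the Iwasawa coordinates of the Siegel representative $h = g'(\gamma')^{-1}$ in terms of those of $g'$. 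In particular, nothing prevents $h$ from sitting arbitrarily high in the $A$-direction while $g'$ does not, so $\gamma' = h^{-1}g'$ need not be bounded by the compact data of $K$, $M$, and $F$ alone.

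You recognize this yourself in the last paragraph, but the remedy you propose --- invoking Borel's Theorem~C --- is exactly the statement you are trying to prove. So the argument is circular rather than a reduction to an easier fact. What is actually required is a comparison between the $A$-coordinate of a point in $\mathfrak{S}f$ and its distance to $\Lambda_0$ in $G/\Gamma$; this is precisely the content of Borel's result (and is what Lemma~\ref{dl} later exploits). In short: your strategy correctly locates the difficulty, but does not resolve it, and the paper does not either --- it defers to the literature.
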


Now we can state the desired relationship between $\Delta$  and $\operatorname{dist}$:

\begin{Lemma}
\label{dl} $\sup_{g\in G}|\operatorname{dist}(g\Lambda_0,\Lambda_0) - \Delta(g\Lambda_0)|<\infty$.
\end{Lemma}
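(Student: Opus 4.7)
The strategy is to show that both quantities agree, up to a bounded additive error, with the ``height'' $t$ arising from the Iwasawa decomposition when restricted to a Siegel set.

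Given $g\in G$, Theorem~\ref{siegel} produces $s\in\mathfrak{S}=KA_\tau M$, $f_i\in F$, and $\gamma\in\Gamma$ with $g=sf_i\gamma$, so $g\Lambda_0=sf_i\Lambda_0$. Writing $s=kg_tu$ via Theorem~\ref{iwasawa} with $k\in K$, $t\geq-\ln\tau$, $u\in M$, the right-invariance and bi-$K$-invariance of $\operatorname{dist}_G$, its normalization $\operatorname{dist}_G(e,g_t)=|t|$, the compactness of $K$ and $\overline{M}$, and the finiteness of $F$ together give $\operatorname{dist}_G(e,sf_i)=|t|+O(1)$. Theorem~\ref{sigelconj} then yields
\[\operatorname{dist}(g\Lambda_0,\Lambda_0)=|t|+O(1).\]

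For the parallel estimate $\Delta(g\Lambda_0)=|t|+O(1)$, note that for $v\in L$ one has $\|v\|=|v_{n+2}|$, which is $K$-invariant, so $\omega(g\Lambda_0)=\omega(g_tuf_i\Lambda_0)$. The upper bound $\omega(g_tuf_i\Lambda_0)\leq Ce^{-t}$ follows because each $f_i\Lambda_0$ is commensurable with $\Lambda_0$ and contains a nonzero integer multiple $n_i\e_1$ with $n_i$ bounded in $i$; since $U$ fixes $\e_1$, $\|g_tu(n_i\e_1)\|=n_ie^{-t}$. For the matching lower bound, I would use the $g_t$-weight decomposition $\r^{n+2}=\r\e_1\oplus V\oplus\r(\u_1-\u_{n+2})$ with $V=\Span(\u_2,\dots,\u_{n+1})$ and weights $e^{-t},1,e^t$. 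Writing $w=uv=a\e_1+w'+b(\u_1-\u_{n+2})$ with $4ab+\|w'\|_e^2=0$, a direct computation gives $\|g_tw\|=|ae^{-t}-be^t|$. If $v\in f_i\Lambda_0$ is not a multiple of $\e_1$, then since $U$ preserves $\r\e_1$ we have $b\neq 0$; a uniform lower bound $|b|\geq c>0$ (over $u\in M$, finitely many $i$, and such $v$) produces $\|g_tw\|\geq \tfrac{c}{2}e^t\geq ce^{-t}$ for $t\geq-\ln\tau$. Otherwise $v=a'\e_1$ is a nonzero $\e_1$-multiple in $f_i\Lambda_0$, giving $\|g_tw\|=|a'|e^{-t}\geq c'e^{-t}$ by discreteness of the integer multiples of $\e_1$ in each $f_i\Lambda_0$.

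The main obstacle is verifying the uniform positive lower bound on $|b|$ above. I would handle this by a compactness/discreteness argument: a violating sequence $(u_n,v_n)$ with $u_n\in M$, $v_n\in f_i\Lambda_0\setminus\r\e_1$, and $u_nv_n$ converging to $\r\e_1$ would, after passing to a subsequence with $u_n\to u_\infty\in\overline{M}$, force the distinct $v_n$ to accumulate onto the line $u_\infty^{-1}(\r\e_1)$, contradicting discreteness of the lattice $f_i\Lambda_0$. Combining the two bounds gives $\omega(g\Lambda_0)\asymp e^{-\max(t,0)}$ uniformly in $g$, hence $\Delta(g\Lambda_0)=|t|+O(1)=\operatorname{dist}(g\Lambda_0,\Lambda_0)+O(1)$, completing the proof.
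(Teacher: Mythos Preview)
Your overall strategy coincides with the paper's: write $g=kg_tuf_i\gamma$ via the Siegel set, show that both $\operatorname{dist}(g\Lambda_0,\Lambda_0)$ and $\Delta(g\Lambda_0)$ equal $t+O(1)$, and combine via Theorem~\ref{sigelconj}. Your treatment of the distance term and of the upper bound $\omega(g\Lambda_0)\le n_ie^{-t}$ is correct and matches the paper (which produces the same witness vector by clearing denominators in $f_i^{-1}\e_1$).

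The gap is in your lower bound on $|b|$. The compactness/discreteness sketch does not go through: from $b_n\to 0$ you cannot conclude that the $v_n$ (or the $u_nv_n$) accumulate anywhere, because nothing prevents $\|v_n\|\to\infty$; a discrete subset of $L$ can certainly have unbounded points whose $b$-coordinate tends to $0$, so there is no contradiction with discreteness of $f_i\Lambda_0$. The fix is already implicit in your setup. You observed that $U$ preserves $\r\e_1$; in fact, since $U$ is the contracting horospherical subgroup, it acts unipotently and fixes the highest $g_t$-weight quotient, so the $b$-coordinate of $uv$ equals that of $v$ for every $u\in U$. Thus you only need a lower bound on $|b(v)|$ for $v\in f_i\Lambda_0$ with $b(v)\ne 0$. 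Because $f_i\in\SL_{n+2}(\q)$, one has $f_i\Lambda_0\subset \tfrac1N\z^{n+2}$ for a common denominator $N$, whence $b(v)=\tfrac12(v_1-v_{n+2})\in\tfrac{1}{2N}\z$ and so $|b(v)|\ge \tfrac{1}{2N}$. This is exactly the integrality input the paper uses; there it is packaged more directly via the inequality $\|g_sw\|\ge e^{-s}\|w\|$ for $w\in L$ (valid because on $L$ the two terms in \equ{g_t height} have the same sign) together with $\| (Nf_i)\gamma v\|\ge 1$. Either route closes the argument; your compactness substitute does not.
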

\begin{proof} We are going to relate both functions in 
the statement of the lemma to the $A_\tau$-term of the Siegel decomposition of $g$ given by Theorem \ref{siegel}. By the theorem we have a description \eq{g decomp}{g = k g_s u f_i \gamma \in \Omega \Gamma\,,}
where  $f_i \in G \cap \SL_{n+2}(\q)$, $k\in K$, $s\ge -\ln(\tau)$, $u\in M\subset U$ and $\gamma\in\Gamma$. We first show that $\omega(g\Lambda_0) \asymp e^{-s}$  (here and hereafter we use notation $A \asymp B$ if $A \ll B \ll A$).
Let $N$ be a common denominator of all the matrix coefficients of $f_1,\dots,f_m$ and $f_1\inv,\dots,f_m\inv$. Then $Nf_i\inv \in \operatorname{GL}_{n+2}(\z)$, therefore  ${\bf w}:=N\gamma\inv f_i\inv{\bf e}_1\in \z^{n+2}\cap L = \Lambda_0$. Since ${\bf e}_1$ is fixed by $U$ and contracted by $g_s$, $s > 0$, we have that 
\begin{eqnarray*}
\omega(g\Lambda_0)\le \left\| g{\bf w}\right\| &=& \left\| k g_s u f_i\gamma (N\gamma\inv f_i\inv{\bf e}_1)\right\| 
\\&=& N\left\| k g_s {\bf e}_1 \right\|\\ &=& N\left\| k (e^{-s} {\bf e}_1) \right\|\\ &\ll& 
e^{-s}.
\end{eqnarray*}

To prove the other bound,
note that the terms $g_s$ contract scalar multiples of ${\bf e}_1$ faster than any other vectors in $L$, hence
for every ${\bf v}\in\Lambda_0\nz$ 
one has 
\begin{eqnarray*} \left\| g{\bf v} \right\|  &=&  \left\|  k g_s u f_i \gamma{\bf v} \right\| \gg \left\|  g_s u f_i \gamma{\bf v} \right\| \\ &=& \frac1{N} \left\|  g_s u (Nf_i) \gamma{\bf v} \right\| \ge  \frac1{N} e^{-s}\left\|  u (Nf_i) \gamma{\bf v} \right\| \\ &\ge& \frac1{N} e^{-s}\frac1{ \left\| u\inv \right\|}\left\|  Nf_i \gamma{\bf v} \right\| \ge \frac{e^{-s}}{ N\left\| u\inv \right\|}\end{eqnarray*}
(the last inequality holds since $Nf_i \gamma{\bf v}\in\z^{n+2}\nz$). But $u$ belongs to a compact subset of $U$, hence $\left\| u\inv \right\|$ is uniformly bounded from above; thus $\omega(g\Lambda_0) \gg e^{-s}$, as desired. In other words,  $\sup_{g\in G}| \Delta(g\Lambda_0) - s|<\infty\,,$ where $g$ and $s$ are as in \equ{g decomp}. In view of  Theorem \ref{sigelconj}, to prove Lemma \ref{dl} it remains to show that 
$$\sup_{f\in F,\,k\in K,\,u\in M,\,s \ge -\ln(\tau)}| \operatorname{dist}_G(kg_suf,e) - s|<\infty\,.$$
But this is immediate from the  invariance properties of the metric, 
compactness of $K$, boundedness of $MF$ and the normalization of $\operatorname{dist}_G$. \end{proof}


A consequence of the above lemma is a compactness criterion for subsets of $\mathcal{L}$, similar to 
Mahler's Compactness Criterion for $\SL_n(\r)/\SL_n(\z)$ \cite{M}. For $\ve > 0$, consider
$${\mathcal{K}_\ve := \{ \Lambda \in \mathcal{L} : \omega(\Lambda) \ge \ve \} = \{ \Lambda \in \mathcal{L} : \Delta(\Lambda) \le \log(1/\ve) \}\,. }$$

\begin{Cor} \label{mahlers} 
A subset $E\subset\mathcal{L}$ is relatively compact if and only if  $E\subset\mathcal{K}_\ve$ for some positive $\ve$. 
\end{Cor}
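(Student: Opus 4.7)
The plan is to use Lemma \ref{dl} to transfer the statement from the function $\omega$ to the metric on $\mathcal{L}$, and then invoke the Siegel decomposition of Theorem \ref{siegel} to identify metrically bounded subsets of $\mathcal{L}$ with relatively compact ones.

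For the forward direction, if $E$ is relatively compact, then the continuous function $\Lambda \mapsto \operatorname{dist}(\Lambda, \Lambda_0)$ is bounded on $\overline{E}$; by Lemma \ref{dl}, $\Delta$ is then bounded on $E$, which (via \equ{delta}) is equivalent to a uniform positive lower bound on $\omega$, so $E \subset \mathcal{K}_\ve$ for some $\ve > 0$.

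For the reverse direction, it suffices to show that $\mathcal{K}_\ve$ itself is relatively compact. First I would note that $\omega$ is continuous on $\mathcal{L}$ (the minimum defining it is locally achieved by one of only finitely many lattice vectors, since long vectors in the lattice stay long under small perturbations), so $\mathcal{K}_\ve$ is closed. To see that it is contained in a compact set, I would appeal directly to the Siegel decomposition: by Theorem \ref{siegel}, every $\Lambda \in \mathcal{L}$ is represented by some $g \in \Omega = \bigcup_{i=1}^m \mathfrak{S}_{\tau,M} f_i$, so $g = k g_s u f_i$ with $k \in K$, $s \ge -\ln \tau$, $u \in M$, and $f_i \in F$. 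The estimate $\omega(g\Lambda_0) \asymp e^{-s}$, already established inside the proof of Lemma \ref{dl}, shows that the condition $\omega(g\Lambda_0) \ge \ve$ forces $s$ to lie in a bounded interval $[-\ln \tau, s_{\max}(\ve)]$. Since $K$ is compact, $\overline{M}$ is compact, $F$ is finite, and $s$ ranges over a compact interval, the corresponding set of group elements is relatively compact in $G$; its image in $\mathcal{L}$ is then a compact set containing $\mathcal{K}_\ve$.

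The only substantive content is the implication ``bounded in the quotient metric $\Rightarrow$ relatively compact,'' which genuinely requires the Siegel set structure of the non-compact arithmetic quotient $\mathcal{L}$. Because that structure was already invoked in Lemma \ref{dl}, and because the equivalence $\omega \asymp e^{-s}$ was proved there, the present corollary reduces to the observation that each of the factors $K$, $A_\tau$ (truncated at $s_{\max}$), $M$, and $F$ appearing in the Siegel decomposition is relatively compact once the $A$-coordinate is bounded above --- which is the bookkeeping step I expect to be entirely routine.
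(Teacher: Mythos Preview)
Your proof is correct and follows essentially the same route as the paper, making explicit the Siegel-set bookkeeping that the paper's one-line argument (``each $\mathcal{K}_\ve$ is bounded, immediate from Lemma \ref{dl}'') leaves implicit. The only minor difference is in the forward direction: the paper invokes the continuity of $\omega$ directly (a positive continuous function on a compact set attains a positive minimum), whereas you detour through $\operatorname{dist}$ and Lemma \ref{dl} to reach the same conclusion.
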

\begin{proof} The `only if' direction is straightforward by the continuity of $\omega$; for the other direction, it suffices to show that each  $\mathcal{K}_\ve$ is bounded, which is immediate from Lemma \ref{dl}.\end{proof}

Clearly ${\mathcal K}_0 = \mathcal{L}$ and if $\delta > \ve$, then ${\mathcal K}_\delta \subset {\mathcal K}_\ve$; thus $\{{\mathcal K}_\ve  : \ve > 0\}$ gives a compact exhaustion of $\mathcal{L}$. This makes it possible to interpret the correspondence of Theorem \ref{dictionary} as a connection between  good approximations of $\alpha\in S^n$ by rational points of $S^n$  and excursions of trajectories $g_tr_\alpha\Lambda_0$ in $\mathcal{L}$ outside of large compact subsets. 

\medskip
We 
close the section with another useful corollary:
%

\begin{Cor} \label{mc}
There exists $C > 0$ such that $K_C = \varnothing$.
\end{Cor}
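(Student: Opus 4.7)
The plan is to read off a uniform upper bound on $\omega$ directly from the first half of the proof of Lemma \ref{dl}; the only new observation needed is that the parameter $s$ appearing in the Siegel decomposition is bounded \emph{below}, so $e^{-s}$ is bounded \emph{above}.

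More precisely, I would argue as follows. Fix any $\Lambda\in\mathcal{L}$ and write $\Lambda=g\Lambda_0$. By Theorem \ref{siegel} we may replace $g$ by some element of $\Omega\gamma$ and assume $g=kg_s u f_i$ with $k\in K$, $s\ge-\ln(\tau)$, $u\in M$, and $f_i\in F$. Let $N$ be a common denominator of the matrix entries of $f_1,\dots,f_m$ and their inverses, so that $\mathbf{w}:=Nf_i^{-1}\mathbf{e}_1$ lies in $\z^{n+2}\cap L=\Lambda_0$. Since $U$ fixes $\mathbf{e}_1$ and $g_s\mathbf{e}_1=e^{-s}\mathbf{e}_1$, one has
$$
\omega(g\Lambda_0)\le \|g\mathbf{w}\| \;=\; N\|kg_su f_i f_i^{-1}\mathbf{e}_1\| \;=\; Ne^{-s}\|k\mathbf{e}_1\| \;\le\; N\tau\sup_{k\in K}\|k\mathbf{e}_1\|.
$$
Since $K$ is compact, the supremum on the right is finite; call the resulting constant $C_0$. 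Then $\omega(\Lambda)\le C_0$ for every $\Lambda\in\mathcal{L}$, so any choice $C>C_0$ makes the set $\mathcal{K}_C=\{\Lambda:\omega(\Lambda)\ge C\}$ empty.

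There is really no obstacle here: the estimate $\omega(g\Lambda_0)\ll e^{-s}$ was already established in Lemma \ref{dl}, and the corollary is just the observation that $s$ is bounded below on the Siegel cover, so $\omega$ is bounded above on all of $\mathcal{L}$. Intuitively, the fundamental domain meets only finitely many cusps, and in each cusp one can produce a short vector whose size is controlled by how far one has traveled into the cusp; at the ``shallow'' end of the cusp that size is uniformly bounded.
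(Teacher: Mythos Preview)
Your argument is correct, and it is a genuinely different route from the paper's. The paper argues by contradiction via compactness: if no such $C$ existed one would get a sequence $\Lambda_k$ with $\omega(\Lambda_k)>k$, which sits inside $\mathcal{K}_1$ and hence is precompact by Corollary~\ref{mahlers}; any limit point then has a nonzero vector approximated by vectors of norm $>k$, a contradiction. Your proof instead bypasses Mahler's criterion entirely and extracts the bound constructively from the estimate $\omega(g\Lambda_0)\ll e^{-s}$ already proved in Lemma~\ref{dl}, combined with the built-in lower bound $s\ge -\ln\tau$ coming from the definition of the Siegel set. What you gain is an explicit (in principle effective) value of $C$ in terms of $N$, $\tau$, and $\sup_{k\in K}\|k\mathbf{e}_1\|$; what the paper's argument gains is independence from the fine structure of the Siegel decomposition, relying only on the qualitative compactness statement.
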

\begin{proof} 
If no such constant $C$ existed, then for every $k \in \n$ we could find $\Lambda_k \in \mathcal{L}$ such that $\omega(\Lambda_k) > k$. By Corollary \ref{mahlers}, the collection $\{\Lambda_k : k \geq 1 \}$ is precompact, and hence has a limit point $ \Lambda$. Let $\v \in \Lambda$ be nonzero. By the topology on $\mathcal{L}$, there exist vectors $\v_k \in \Lambda_k$ such that $\v_k \to \v$. But this contradicts the fact that $\left\| \v_k \right\| > k$. 
\end{proof}

\section{Proofs of Theorems \ref{special dirichlet}, \ref{ba}, \ref{khintchine}, and \ref{h measures sphere}}\label{proofs}
\subsection{Dirichlet's Theorem}\label{dt}

Our goal for this subsection is to derive Theorem \ref{special dirichlet} from a stronger statement, Theorem  \ref{true dirichlet}. 
Let us introduce the following definition;
for a subset $X$ of $\r^{n+1}$ and   real numbers
 $C,a,b$ let us say that $\alpha \in X$ is 
\begin{itemize}
\item[$\bullet$] {\sl $(C,a,b)$-uniformly Dirichlet in $X$\/}  if given any $N > 1$  
\eq{dir}{\exists\,\pq\in X\text{ with }q \leq N\text{  such that }\left\| \alpha - \pq \right\| <\frac{C}{q^aN^b}.} 
\item[$\bullet$] {\sl $(C,a,b)$-Dirichlet in $X$\/}  if $\exists\,N_0$ such that \equ{dir} holds for $N > N_0$. 
\end{itemize}


In \cite{Schmutz} it was shown that every $\alpha \in S^n$ is $(C,0,b)$-uniformly Dirichlet in $S^n$ with 
$$C = 4\sqrt{2}\lceil{\log_2(n+1)}\rceil\quad\text{and}\quad b = \frac1{2\lceil{\log_2(n+1)}\rceil}\,.$$ Later 
a systematic study of this property for homogeneous varieties $X$ was undertaken in  \cite{GGN1}, where in particular it has been shown that
\begin{itemize}
\item[$\bullet$] every $\alpha\in S^n$ is $(1,0,b)$-Dirichlet in $S^n$ for any
$$\begin{cases}b < 1/4 & n   \text{ even}\\ b < 1/3 & n  = 3\\ b < \frac14 + \frac3{4n} & n \ge 5 \text{ odd} \end{cases}$$
 \item[$\bullet$] almost every $\alpha\in S^n$ is $(1,0,b)$-Dirichlet  in $S^n$, where
$$\begin{cases}b < 1/2 & n   \text{ even}\\ b < 2/3 & n  = 3\\ b < \frac12 + \frac3{2n} & n \ge 5 \text{ odd} \end{cases}$$
\end{itemize}

In this section we prove 

\begin{Thm} \label{true dirichlet} There exists a constant $C$ such that   $\forall\,\alpha \in S^n$ is $(C,1/2,1/2)$-uniformly Dirichlet in $S^n$. 
\end{Thm}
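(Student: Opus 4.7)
The plan is to deduce Theorem \ref{true dirichlet} as a direct consequence of the correspondence set up in Section \ref{corr} and the compact exhaustion of $\mathcal{L}$ from Section \ref{rt}. The crucial input is Corollary \ref{mc}, which yields a universal constant $C_0 > 0$ such that $\mathcal{K}_{C_0} = \varnothing$; equivalently, for every $\alpha \in S^n$ and every $t \in \r$, the lattice $g_t r_\alpha \Lambda_0$ contains a nonzero vector of norm strictly less than $C_0$. Combined with Lemma \ref{close vector} this essentially reads off the uniform Dirichlet estimate.

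More precisely, given $\alpha \in S^n$ and $N > C_0$, I would set $t := \ln(N/C_0) > 0$, so that $e^t C_0 = N$, and apply the uniform short-vector bound to $g_t r_\alpha \Lambda_0$ to produce $({\bf p},q) \in \Lambda_0 \setminus \{0\}$ with $\left\|g_t r_\alpha ({\bf p},q)\right\| < C_0$. Since $({\bf p},q) \in \z^{n+2} \cap L$ is nonzero, the lightcone relation $\left\|{\bf p}\right\|_e = |q|$ forces $q \ne 0$, and after replacing $({\bf p},q)$ by its negative if necessary we may assume $q \ge 1$; in particular $\pq \in S^n$. Then Lemma \ref{close vector} applied with $\delta = C_0$ yields $q < e^t C_0 = N$ together with
$$\left\| \alpha - \pq \right\| \;<\; \frac{2C_0}{\sqrt{qN}}\,,$$
which is exactly the $(2C_0, 1/2, 1/2)$-uniform Dirichlet condition \equ{dir}.

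The leftover range $1 < N \le C_0$ is handled trivially: take $({\bf p},q) = ({\bf u}_1, 1) \in S^n$, so that $\left\| \alpha - {\bf u}_1 \right\| \le 2$ while $\sqrt{qN} \le \sqrt{C_0}$, and absorb the resulting factor into the final constant $C := \max\bigl(2C_0,\, 2\sqrt{C_0}\bigr)$. Theorem \ref{special dirichlet} then follows immediately: the bound $\left\| \alpha - \pq \right\| < C/\sqrt{qN} \le C/q$ produces infinitely many distinct approximants as $N \to \infty$ whenever $\alpha$ is irrational, while rational $\alpha$ are approximated trivially by themselves.

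I do not anticipate any substantive obstacle here. All the analytic and dynamical work has already been absorbed into Corollary \ref{mc} (non-divergence of orbits, established via reduction theory in Section \ref{rt}) and Lemma \ref{close vector} (the quantitative dictionary between approximants and short vectors). The proof of Theorem \ref{true dirichlet} then consists of nothing more than calibrating $t$ so that the auxiliary parameter $N = e^t \delta$ appearing in Lemma \ref{close vector} matches the prescribed bound in the definition of uniform Dirichlet; the rest is bookkeeping of constants.
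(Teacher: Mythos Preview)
Your proposal is correct and follows essentially the same approach as the paper: choose $t = \ln(N/C_0)$, apply Corollary \ref{mc} to produce a short vector, and read off the approximation via Lemma \ref{close vector}, with the small-$N$ range handled by the trivial approximant $({\bf u}_1,1)$. The only cosmetic difference is that the paper observes $C_0 \ge 1$ (witnessed by $\Lambda_0$) and uses this to get the single constant $2C_0$ throughout, whereas you take $C = \max(2C_0, 2\sqrt{C_0})$; your extra care in noting $q \ne 0$ via the lightcone relation is a detail the paper leaves implicit.
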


This, in particular,  implies being $(1,0,b)$-Dirichlet for any $b < 1/2$ and improves on all the aforementioned results  valid for every $\alpha$ (although for odd $n$,  \cite{GGN1}'s almost everywhere statements yield a still better approximation). Also it is clear that, for $b > 0$,  $(C,a,b)$-Dirichlet  implies $C\phi_{a+b}$-approximable; thus Theorem \ref{special dirichlet} immediately follows from Theorem \ref{true dirichlet}. Note that our value for $C$, coming from Corollary \ref{mc}, is not effective -- it would be interesting to get an explicit estimate. As mentioned previously, for $n=1$ it follows from \cite{Fukshansky} that we may take $C = 2\sqrt{2}$. 


\begin{proof}  
Let $C$ be the minimal constant making Corollary \ref{mc} true; clearly $C \geq 1$ as witnessed by the standard lattice $\Lambda_0$. Fix $\alpha \in S^n$ and let $N > C \geq 1$. We need to find $\pq \in S^n$ such that $$q \leq N \text{ and } \left\| \alpha - \pq \right\| < \frac{2C}{\sqrt{qN}}.$$ Let $t = \ln\left(\frac{N}{C}\right) > 0$, and consider the lattice $g_t r_\alpha \Lambda_0 \in \mathcal{L}$. By 
Corollary \ref{mc}, we have that $\omega(g_t r_\alpha \Lambda_0) \leq C$. Let $({\bf p},q) \in \Lambda_0$ be such 
that $\|g_t r_\alpha({\bf p},q)\|\le C$. Then 
$q \leq e^t  C = N$, and by Lemma \ref{close vector}, we have that $$\left\| \alpha - \pq \right\| \leq \frac{2C}{\sqrt{qN}}$$ as needed.

It remains to prove the inequality when $1 < N \leq C$. Let $\pq = \frac{1}{1}(1,0,...,0,1)$. Because the diameter of the sphere is $2$, for any $\alpha \in S^n$ we have   $$\left\|\alpha - \frac{1}{1}(1,0,...,0,1) \right\| \leq 2 < \frac{2C}{\sqrt{1 N}}$$ as desired.
\end{proof}

\subsection{$\BA(S^n)$ and the optimality of Theorem \ref{special dirichlet}}\label{bdd}

We now show that the function $\phi_1(q) = \frac{1}{q}$ appearing in Theorem \ref{special dirichlet}  is optimal in the sense that for any faster decaying function $\psi$, there are points in $S^n$ which are not 
$\psi$-approximable. Specifically, any badly approximable point will fail to be $\psi$-approximable. So to demonstrate the optimality of $\phi_1$, it suffices to show that $\BA(S^n)$ is nonempty. Indeed, we will show more, namely that this set is  thick.
The key ingredient here is a dynamical interpretation of the set $\BA(S^n)$. It will be convenient to define 
\eq{def b}{\mathcal{B}:= \big\{g \in G : \{g_t g \Lambda_0 : t \ge 0\}  \text{ is bounded in $\mathcal{L}$}\big\}\,.}

\begin{Prop}\label{ba is bdd}  $\alpha \in \BA(S^n)$  if and only if $r_\alpha\in\mathcal{B}$.
\end{Prop}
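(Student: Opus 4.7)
The plan is to deduce each implication directly from the quantitative correspondence encoded in Lemmas \ref{small vector} and \ref{close vector}, together with the Mahler-type criterion provided by Corollary \ref{mahlers}. The idea is that ``a good rational approximant of $\alpha$'' and ``a short vector in $g_t r_\alpha \Lambda_0$'' are essentially the same object, so the absence of very good approximants (i.e.\ $\alpha\in\BA(S^n)$) is equivalent to the absence of very short vectors along the trajectory (i.e.\ the trajectory lies in some compact set $\mathcal{K}_\varepsilon$). Rational $\alpha$ can be ignored since in that case both sides fail trivially: $\alpha$ fails to be badly approximable because $\alpha$ itself is an approximant of itself, and the corresponding lattice trajectory diverges into the cusp via the vector $g_t(q\e_1) = qe^{-t}\e_1\to 0$.

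For the direction ($\Leftarrow$), assume $r_\alpha\in\mathcal{B}$; by Corollary \ref{mahlers} there exists $\varepsilon>0$ with $\omega(g_t r_\alpha\Lambda_0)\ge\varepsilon$ for all $t\ge0$. I would argue by contraposition: if $\alpha\notin\BA(S^n)$, then for every $c>0$ I can find $\pq\in S^n\cap\q^{n+1}$ with $\|\alpha-\pq\|<c/q$. Taking $c=c_k\to 0$ and applying Lemma \ref{small vector} with $N=q_k$ and $\varep=c_k$ produces times $t_k>0$ with $\|g_{t_k} r_\alpha({\bf p}_k,q_k)\|<c_k\sqrt{n+1}$. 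For $k$ sufficiently large this bound drops below $\varepsilon$, contradicting $\omega(g_{t_k}r_\alpha\Lambda_0)\ge\varepsilon$.

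For the direction ($\Rightarrow$), assume $\alpha\in\BA(S^n)$ with constant $c>0$, so $\|\alpha-\pq\|>c/q$ for every $\pq\in S^n\cap\q^{n+1}$. I would again argue by contraposition: if $r_\alpha\notin\mathcal{B}$, then Corollary \ref{mahlers} implies that for every $\delta>0$ there is $t>0$ with $\omega(g_t r_\alpha\Lambda_0)<\delta$. Choose $\delta=c/2$; then there is $({\bf p},q)\in\Lambda_0\nz$ with $\|g_t r_\alpha({\bf p},q)\|<c/2$, and Lemma \ref{close vector} produces $N>q$ (explicitly $N=e^t c/2$) with
\[
\left\|\alpha-\pq\right\| < \frac{2\delta}{\sqrt{qN}} = \frac{c}{\sqrt{qN}} < \frac{c}{q},
\]
where the final strict inequality uses $N>q$. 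This contradicts $\alpha\in\BA(S^n)$. (One may need to shrink $c$ at the outset to ensure $c/2<\omega(\Lambda_0)$, so that the required $t$ is strictly positive and Lemma \ref{close vector} applies; this is harmless.)

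The argument is essentially bookkeeping once the two correspondence lemmas are in hand, so I don't anticipate a serious obstacle. The only delicate point is tuning the constants so that the bound furnished by Lemma \ref{close vector} beats $c/q$ on the nose; the calculation above shows that $\delta=c/2$ is exactly the right threshold, with the strict inequality $q<N$ in Lemma \ref{close vector} providing the small slack needed to close the argument.
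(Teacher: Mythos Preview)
Your proof is correct and rests on the same two ingredients as the paper's: the approximant--short-vector correspondence (Lemmas \ref{small vector} and \ref{close vector}) together with Corollary \ref{mahlers}. The only difference is packaging: the paper routes both implications through Theorem \ref{dictionary} applied to $\phi = c\,\phi_1$ (for which $\rho$ comes out constant, $\rho\equiv c/2$), while you bypass Theorem \ref{dictionary} and invoke the two lemmas directly with hand-tuned constants---a slightly more elementary but entirely equivalent argument.
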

\begin{proof} First suppose that $\alpha \in \BA(S^n)$, i.e.\ there exists $\varepsilon > 0$ such that $\alpha$ is not in $A(\varepsilon \phi_1, S^n)$. Applying Theorem \ref{dictionary} with the function $\phi:= \frac{\varepsilon}{\sqrt{n+1}}\phi_1$, we have that for all $t$ sufficiently large, $$\omega(g_t r_\alpha \Lambda_0) > \rho(t),$$ where $\rho(t)$ is given by \equ{r(t)} (note in this case $\phi$ is its own inverse): $$\rho(t) = e^{-t}\phi\inv\left(\frac{2}{\sqrt{n+1}}e^{-t}\right) = e^{-t} \frac{\varepsilon}{\sqrt{n+1}} \frac{\sqrt{n+1}e^t}{2} = \frac{\varepsilon}{2},$$
independent of $t$. But this, by Corollary \ref{mahlers}, says precisely that the orbit $\{g_t r_\alpha \Lambda_0 : t \geq 0 \}$ is bounded in $\mathcal{L}$. 

Conversely, suppose that $r_\alpha\in\mathcal{B}$.
By Corollary \ref{mahlers} this is equivalent to the existence of $c > 0$ such that $\omega(g_t r_\alpha \Lambda_0) > c$ for every $t \geq 0$. Let $\phi := \frac{c}{\sqrt{n+1}}\phi_1$, then similarly to the above computaion, $\rho(t) = c/2$, therefore $$\omega(g_t r_\alpha \Lambda_0) > 2\rho(t) = c \text{ for all }t \geq 0\,.$$ By Theorem \ref{dictionary}, $\alpha$ is not contained in $ A\left(\frac{c}{\sqrt{n+1}} \phi_1, S^n \right)$, i.e. $\alpha \in \BA(S^n)$. 
\end{proof}

Now recall  the following theorem of  Dani  \cite{Da2}:


\begin{Thm}\label{dani} 
The set $H \cap \mathcal{B}$ is thick in $H$. 
\end{Thm}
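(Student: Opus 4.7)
The plan is to show that $H \cap \mathcal{B}$ is winning for Schmidt's $(\alpha,\beta)$-game on $H$, viewed as $\r^n$ via the exponential parametrization of the horospherical subgroup; since winning sets are automatically thick in Euclidean spaces, this implies the theorem.

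For Alice's strategy, use Corollary \ref{mahlers} to rephrase membership in $\mathcal{B}$ as: $g_t h \Lambda_0 \in \mathcal{K}_c$ for some $c > 0$ and all $t \ge 0$. At the $k$-th round Bob presents a ball $B_k \subset H$ of radius $r_k$, and we associate to it a discrete time $t_k$ with $e^{-t_k} \asymp r_k$. Since $H$ is the expanding horospherical subgroup for $\{g_t\}$, conjugation by $g_{t_k}$ scales $H$ by $e^{t_k}$, so $g_{t_k} B_k \Lambda_0$ has uniformly bounded diameter in $\mathcal{L}$. By the reduction theory of Section \ref{rt}, the bad set $\{h \in B_k : g_{t_k} h \Lambda_0 \notin \mathcal{K}_c\}$ corresponds to the intersection of $g_{t_k} B_k \Lambda_0$ with the thin part of $\mathcal{L}$, which is a union of finitely many cuspidal neighborhoods. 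A geometric estimate shows this bad set is covered by $O(1)$ balls in $H$ of radius $\ll \alpha r_k$ (provided $c$ is chosen small enough relative to $\alpha$), so Alice selects her ball $A_k \subset B_k$ of radius $\alpha r_k$ to avoid them. Any $h \in \bigcap_k B_k$ then satisfies $g_{t_k} h \Lambda_0 \in \mathcal{K}_c$ for every $k$; interpolating between $t_k$ and $t_{k+1}$ at a slightly smaller constant $c'$ shows the full forward orbit is bounded, i.e.\ $h \in H \cap \mathcal{B}$.

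The main obstacle is the cusp-geometry estimate controlling how many bad balls appear and at what scale. Since $G = \SO(n+1,1)$ has real rank one and $\Gamma$ is a non-uniform lattice, there are only finitely many $\Gamma$-orbits of cusps, each stabilized by a rank-$n$ parabolic whose unipotent radical acts on the relevant horospherical cross-section by translations; the required estimate then reduces to counting lattice-translates of a fundamental bounded region that can hit a ball of size $O(1)$ in the upper-half-space model, which is $O(1)$. This is precisely the geometric content of Dani's argument in \cite{Da2} for bounded forward-geodesic trajectories on finite-volume hyperbolic manifolds: under the identification of $\mathcal{L}$ with the unit tangent bundle of $\Gamma \backslash \h^{n+1}$ (with $g_t$ the geodesic flow and $H$ a strong-unstable leaf), Theorem \ref{dani} becomes the theorem he proves.
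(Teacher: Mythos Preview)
The paper does not prove this theorem at all: it is stated as a quotation of Dani's result \cite{Da2} (``Now recall the following theorem of Dani''), and is used as a black box in the proof of Theorem~\ref{ba}. The paper even remarks afterward that Dani's proof proceeds by establishing the winning property in the sense of Schmidt.

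Your proposal is therefore not a comparison target in the usual sense; you are sketching Dani's own argument rather than an alternative to the paper's. As a sketch of \cite{Da2} it is accurate in outline: the Schmidt-game strategy, the time scale $e^{-t_k}\asymp r_k$ matching the expansion of $H$, and the reduction to a cusp-counting estimate are all the right ingredients. But you should be aware that the substantive work---the ``main obstacle'' you name---is not actually carried out in your write-up; you end by invoking \cite{Da2} for exactly the estimate that constitutes the proof. So what you have written is closer to an annotated citation than an independent proof, which is fine given that the paper itself only cites the result, but you should not present it as more than that.
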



Theorem \ref{ba} will follow from showing the set  $W  \cap\BA(S^n)$ to be bi-Lipschitz to a neighborhood in $H \cap \mathcal{B}$. 

%
\begin{proof}[Proof of Theorem \ref{ba}]



Let $W' \subset H$ be the image of $W$ under the mapping $\alpha\mapsto h_\alpha$
discussed in Lemma \ref{biLip}. 
Since \hd\ is preserved by bi-Lipschitz mappings, it remains to show that $W \cap \BA(S^n)$ is mapped bijectively to $W' \cap \mathcal{B}$. 

By Theorem \ref{ba is bdd}, we know that $\alpha \in \BA(S^n)$ if and only if $r_\alpha \in \mathcal{B}$, i.e. $$\{ g_t r_\alpha \Lambda_0 : t \geq 0 \} \text{ is bounded}.$$ But 
$$g_t h_\alpha \Lambda_0 = g_t h_\alpha r_\alpha\inv r_\alpha \Lambda_0 = (g_t h_\alpha r_\alpha\inv g_t\inv ) g_tr_\alpha \Lambda_0$$ is at a uniformly bounded distance from $g_tr_\alpha \Lambda_0$, since, by 
Lemma \ref{biLip}, $h_\alpha r_\alpha\inv $ is an element of $ U H^0$, the product of the neutral and contracting horospherical subgroups corresponding to $\{g_t : t \ge 0\}$. 
Thus $r_\alpha \in \mathcal{B}$ if and only if $h_\alpha \in \mathcal{B}$, i.e. $$h: W \cap \BA(S^n) \to W' \cap \mathcal{B}$$ is a bijection, as needed.
\end{proof}
Note that Dani proves this by establishing a stronger property: winning in the sense of Schmidt \cite{Schmidt games}. This has been recently strengthened by McMullen to so-called absolute winning, see \cite{McM} for details. Both winning and absolute winning properties are preserved by bi-Lipschitz mappings.
Consequently, Theorem \ref{ba} can be strengthened to an assertion that the set $\BA(S^n)$ is absolutely winning.

\subsection{Khintchine's Theorem}\label{kt}

We next prove the divergence case of Theorem \ref{khintchine}. Recall that we are given $\phi: \n \to (0,\infty)$ such that the function $k\mapsto k\phi(k)$  is non-increasing and the series 
\equ{sum} diverges. Since $\phi$ is decreasing, we may extend its domain from $\n$ to $[1,\infty)$ such that it is piecewise $C^1$ and the function $x\mapsto x\phi(x)$  is still non-increasing. In view of Theorem \ref{dictionary}, and replacing $\phi$ by $\frac1{\sqrt{n+1}}\phi$, to prove Theorem \ref{khintchine} it suffices to show that 
\eq{conclusion}{\text{for a.e. }\alpha\in S^n\ \exists \text{ a sequence }t_k \to \infty\text{ such that }\omega(g_{t_k}r_\alpha \Lambda_0) < \rho(t_k)\,,} where $\rho:(t_0,\infty) \to(0,\infty)$ is associated to $\phi(\cdot)$ as in  Theorem \ref{dictionary}. 
This will be a consequence of the following theorem -- 
a dynamical Borel-Cantelli Lemma describing the $g_t$-action on the space $(\mathcal{L},\mu)$, where $\mu$ stands for  the probability Haar measure on $\mathcal{L} $.

\begin{Thm} \label{dynamicalkhintchine} For any function 
$\rho: \n \to (0,\infty)$, 
\eq{set}{
\mu\big(\{\Lambda\in\mathcal{L} : \omega(g_{t}\Lambda) < \rho(t ) \text{ for infinitely many }t \in \n\}\big)= \begin{cases} 1 & \  \\ 0 &  \ \end{cases}
}
according to the divergence or convergence of the sum \eq{int}{\sum^\infty_{t=1} \rho(t)^n\,.} 
\end{Thm}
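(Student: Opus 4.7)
The strategy is to split the theorem into its convergence and divergence halves, with both resting on the key measure estimate
\[
\mu\bigl(\{\Lambda\in\mathcal{L}:\omega(\Lambda)<\varepsilon\}\bigr)\asymp \varepsilon^n \quad\text{as }\varepsilon\to 0^+. \tag{$\ast$}
\]
To prove $(\ast)$, I would use the Siegel-set description from Section \ref{rt}. The proof of Lemma \ref{dl} already shows that for $g=kg_s u f_i$ in the fundamental domain $\Omega$ one has $\omega(g\Lambda_0)\asymp e^{-s}$. Combining this with the Iwasawa decomposition of Haar measure on $G=\SO(n+1,1)$ — in which the $A$-factor carries the Jacobian $e^{-ns}$ coming from the adjoint action of $g_s$ on the $n$-dimensional horospherical subgroup $U$ — integration over the single cusp region $\{s:e^{-s}<\varepsilon\}$ yields $\int_{-\ln\varepsilon+O(1)}^\infty e^{-ns}\,ds\asymp \varepsilon^n$. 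One should also verify that only finitely many Siegel translates $\mathfrak{S}f_i$ contribute near the cusp, which is immediate from part (2) of Theorem \ref{siegel}.

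Setting $A_t:=\{\Lambda\in\mathcal{L}:\omega(g_t\Lambda)<\rho(t)\}$, the $g_t$-invariance of $\mu$ combined with $(\ast)$ gives $\mu(A_t)\asymp\rho(t)^n$. The convergence half of the theorem is then the standard Borel--Cantelli lemma applied to $\{A_t\}_{t\in\mathbb{N}}$. For the divergence half, the sets $A_t$ are far from independent, so I would invoke the dynamical Borel--Cantelli framework of Kleinbock--Margulis \cite{KM}. The essential ingredient is exponential mixing of the $g_t$-action on $(\mathcal{L},\mu)$ against Lipschitz (or Sobolev) test functions, which follows from decay of matrix coefficients for the semisimple group $\SO(n+1,1)$. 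Applied to smooth mollifications of $\mathbf{1}_{A_s}$ and $\mathbf{1}_{A_t}$, effective mixing produces a quasi-independence estimate of the form
\[
\mu(A_s\cap A_t)\le \mu(A_s)\mu(A_t) + C\,\sqrt{\mu(A_s)\mu(A_t)}\,e^{-\lambda|s-t|},
\]
from which the Chung--Erd\H{o}s argument (as in \cite[\S1]{KM}) gives $\mu(\limsup A_t)=1$ whenever $\sum\mu(A_t)=\infty$, i.e.\ whenever $\sum\rho(t)^n$ diverges.

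The main technical obstacle is the approximation step needed to feed the non-smooth indicator $\mathbf{1}_{\{\omega<\rho(t)\}}$ into an effective mixing statement: one must produce Lipschitz mollifiers whose widths shrink with $\rho(t)$ while keeping the Lipschitz norms growing only polynomially in $1/\rho(t)$, and then absorb the resulting error into the exponentially decaying mixing bound. This is precisely the issue addressed in \cite{KM}, so the work in this section reduces to verifying that the geometric hypotheses of \cite{KM} — namely $(\ast)$, the product structure of Haar measure near the cusp, and exponential mixing — hold in our rank-one setting $G/\Gamma=\SO(n+1,1)/\Gamma$; each of these has already been established above or is classical.
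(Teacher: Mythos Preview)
Your proposal is correct and follows essentially the same route as the paper. The paper packages the divergence half by observing that the measure estimate $(\ast)$ (stated there as Lemma~\ref{measures}) together with uniform continuity of $\Delta=-\ln\omega$ makes $\Delta$ ``$n$-DL'' in the sense of \cite{KM}, and then cites \cite[Theorem~1.7]{KM} as a black box; you instead unpack that black box (exponential mixing, mollification of indicators, quasi-independence, Chung--Erd\H{o}s), but the substance and the verification of $(\ast)$ via the Siegel set and the $e^{-ns}$ Jacobian are identical.
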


\begin{proof}[Proof of Theorem \ref{khintchine} assuming Theorem \ref{dynamicalkhintchine}]  
First let us have a lemma connecting \equ{sum} to \equ{int}:

\begin{Lemma}\label{phirho} Let $\phi(\cdot)$ and $\rho(\cdot)$ be related via \equ{r(t)}. Then \eq{simult}{\int^\infty_{t_0} \rho(t)^n \,dt < \infty \iff \int^\infty_{x_0} x^{n-1}\phi(x)^n \,dx < \infty\,.}
\end{Lemma}
\begin{proof} 
Using \equ{r(t)}, one can rewrite  \equ{int} as $$\left(\frac{\sqrt{n+1}}{2}\right)^n\int^\infty_{t_0} \left(\frac{2}{\sqrt{n+1}e^t}\right)^n \phi\inv \left(\frac{2}{\sqrt{n+1}e^t}\right)^n \,dt\,.$$ 
After a change of variable $ x = \phi\inv\left(\frac{2}{\sqrt{n+1}e^{t}}\right)$, the previous integral becomes equal to 
$$\int^\infty_{x_0}  \phi(x)^{n} x^n \left(-\frac{\phi'(x)}{\phi(x)}\right)\,dx = -\int^\infty_{x_0} x^n \phi(x)^{n-1} \phi'(x)\,dx\,,$$ 
which, after integration by parts, can be written as $$\int_{x_0}^\infty x^{n-1} \phi(x)^n dx + \frac{1}{n} x_0^n \phi(x_0)^n - \lim_{x \to \infty} \frac{1}{n} x^n \phi(x)^n.$$ But since the function $x\phi(x)$ is non-increasing,  the last term above is finite, and thus the two integrals in \equ{simult} converge or diverge simultaneously.
\end{proof}

Now back to the proof of Theorem \ref{khintchine}. As before, without loss of generality we can restrict our attention to $\alpha \in W$. Suppose that 
\equ{conclusion} fails, that is, there exists a subset $W_0$ of $W$ of positive measure consisting of $\alpha$ such that 
\eq{concl1}{\forall \,\alpha\in W_0,\quad 
\omega(g_{t}r_\alpha\Lambda_0) \ge \rho(t)\text{ for large enough }t\in\r\,.}
Now take a small neighborhood $B$ of identity in $UH^0$, recall the map $\alpha\mapsto h_\alpha$ from Lemma \ref{biLip}, 
and write, for $g\in B$,
$$
g_{t}gh_\alpha \Lambda_0 = g_{t}(gh_\alpha r_\alpha\inv)g_{-t}g_t  r_\alpha\Lambda_0\,.
$$
In view of \equ{agrees}, we have $gh_\alpha r_\alpha\inv$ is contained in $ UH^0$, and moreover, in a fixed (dependent on $B$ and $W_0$) subset of  $ UH^0$. Arguing as in the proof of Theorem  \ref{ba}, we see that there exists a compact subset $M$ of $G$ such that for any $\alpha\in W_0$ and $g\in B$,  one has $
g_{t}gh_\alpha \Lambda_0 = g' g_t  r_\alpha\Lambda_0$ for some $g'\in M$. This and \equ{concl1} imply the existence of a constant $c > 0$ such that 
$${\forall \,\alpha\in W_0\text{ and } g\in B,\quad
\omega(g_{t}gh_\alpha\Lambda_0) \ge c\rho(t)\text{ for large enough }t\in\r\,.}$$
But since the product map $U\times H^0\times H\to G$ is a local diffeomorphism and the map $\alpha\mapsto h_\alpha$ is  bi-Lipschitz, we can conclude, by Fubini's Theorem, that the Haar measure of $g\in G$ such that $\omega(g_{t}g\Lambda_0) \ge c\rho(t)\text{ for large enough }t$ is positive. Therefore the set in \equ{set}, with $\rho$ replaced by $c\rho$ and extended to $\n\cap[1,t_0]$ in an arbitrary way, does not have full measure. 
By Theorem \ref{dynamicalkhintchine}, the sum \equ{int} converges, and by the monotonicity of $\rho$, so does the integral $ \int^\infty_{t_0} \rho(t)^n \,dt $. Thus, by Lemma \ref{phirho} and the regularity of $\phi$, the sum \equ{sum} also converges, contradicting our assumption. \end{proof}


Now let us turn to  Theorem \ref{dynamicalkhintchine}.   Its convergence part (which we do not need for the proof of Theorem \ref{khintchine}) is a straightforward consequence of the Borel-Cantelli Lemma and the following fact:

\begin{Lemma}\label{measures} 
For all $\ve > 0$ one has
$$
\mu(\mathcal{L}\smallsetminus \mathcal{K}_\ve) = \mu \big(\{\Lambda \in\mathcal{L} : \omega(\Lambda ) < \ve\} \big)\asymp\ve^n\,.
$$ 
\end{Lemma}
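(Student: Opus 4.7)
The plan is to reduce the computation of $\mu(\{\omega(\Lambda) < \ve\})$ to an integral over the rough fundamental domain $\Omega = \bigcup_{i=1}^m \mathfrak{S} f_i$ provided by Theorem \ref{siegel}. Since $\Omega$ covers $G$ with bounded $\Gamma$-overlap (Theorem \ref{siegel}(2)),
$$
\mu(\{\omega<\ve\}) \;\asymp\; \int_\Omega \mathbf{1}_{\omega(g\Lambda_0) < \ve}\, dg \;=\; \sum_{i=1}^m \int_{\mathfrak{S}} \mathbf{1}_{\omega(h f_i \Lambda_0) < \ve}\, dh
$$
by right-invariance of Haar measure. It therefore suffices to estimate a single cuspidal integral over $\mathfrak{S}$ and sum.

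Parametrize $h = k g_s u \in \mathfrak{S} = K A_\tau M$ by $(k,s,u) \in K \times [-\ln\tau,\infty) \times M$. The proof of Lemma \ref{dl} yields $\omega(h f_i \Lambda_0) \asymp e^{-s}$ with implicit constants depending only on the common denominator $N$ of the $f_i$'s and on the (uniform) bound of $\|u^{-1}\|$ on the relatively compact set $M$; hence these constants are uniform in $k,u,i$. Consequently, for small $\ve$, the sublevel set $\{\omega(h f_i\Lambda_0)<\ve\}$ inside $\mathfrak{S}$ is exactly $\{s > s_\ve\}$ with $s_\ve = \ln(1/\ve) + O(1)$.

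The central computation is the explicit Haar decomposition in the Iwasawa coordinates $(k,s,u)$:
$$
dg \;=\; c\cdot dk \cdot e^{-ns}\, ds \cdot du,
$$
the exponent $-n$ arising as $-2\rho(\log g_s)$, since $\operatorname{Ad}(g_s)|_{\mathfrak{u}} = e^{-s}\operatorname{Id}$ on the $n$-dimensional Lie algebra $\mathfrak{u}$, whence the modular function of the parabolic $P = MAU$ evaluates to $\delta_P(g_s) = e^{ns}$. Plugging this in,
$$
\int_{\mathfrak{S}} \mathbf{1}_{\omega(h f_i\Lambda_0)<\ve}\, dh \;\asymp\; \operatorname{vol}(K)\operatorname{vol}(M)\int_{s_\ve}^\infty e^{-ns}\,ds \;\asymp\; \ve^n,
$$
and summing over the finitely many cusps $i=1,\dots,m$ preserves the $\asymp \ve^n$ relation.

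The main obstacle is pinning down the explicit form of the Haar density in the $UAK$ coordinates used in the paper---specifically verifying both the magnitude $n$ and the sign of the exponent, and checking that it is indeed $s \to +\infty$ (rather than $s \to -\infty$) that corresponds to the cusp, consistent with $\omega(g\Lambda_0) \asymp e^{-s} \to 0$. Once this is settled, the integration is immediate. Strictly speaking, the asymptotic $\asymp \ve^n$ is meaningful only for $\ve$ in a bounded range since $\mu \le 1$; for $\ve \ge C$ from Corollary \ref{mc} the set $\{\omega<\ve\}$ equals $\mathcal{L}$, so the relation reduces to a trivial comparison on any bounded interval.
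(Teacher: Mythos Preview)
Your argument is correct and follows essentially the same route as the paper's: both reduce to the Iwasawa/Siegel decomposition, identify the cusp variable $s$, and compute the Haar density $e^{-ns}\,ds$ from $\operatorname{Ad}(g_s)|_{\mathfrak u}=e^{-s}\operatorname{Id}$ on the $n$-dimensional $\mathfrak u$. The only cosmetic difference is that the paper first invokes the statement of Lemma~\ref{dl} to replace $\{\omega<\ve\}$ by $\{\operatorname{dist}_G(g,e)\ge z\}$ with $z=\ln(1/\ve)$ and then integrates, whereas you cite the estimate $\omega(hf_i\Lambda_0)\asymp e^{-s}$ directly from the \emph{proof} of Lemma~\ref{dl}; the resulting integral $\int_{s_\ve}^\infty e^{-ns}\,ds\asymp\ve^n$ is the same either way. (One small notational clash: you write $P=MAU$ for the parabolic, while in this paper $M$ already denotes the fixed relatively compact subset of $U$; no harm, but worth renaming.)
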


As for  the divergence part, one needs to verify certain quasi-independence conditions on the $g_t$-preimages of sets $\mathcal{L}\smallsetminus \mathcal{K}_\ve$. Such methods date back to the work of Sullivan \cite{Sull} and Kleinbock-Margulis \cite{KM}; in fact we are going to derive 
Theorem \ref{dynamicalkhintchine} from one of the main results of  \cite{KM}:
  
  \begin{proof}[Proof of Theorem \ref{dynamicalkhintchine}]  
  From the continuity of the $G$-action on $L$ it follows that the function $\Delta $ defined in \equ{delta} is uniformly continuous, and Lemma \ref{measures} amounts to saying that 
  $$ 
  \mu(\{\Lambda \in\mathcal{L} : \Delta(\Lambda ) > z\}) \asymp  e^{-nz}
  \,.$$
  In other words, in the terminology of \cite{KM}, $\Delta$ is $n$-DL. Thus \cite[Theorem 1.7]{KM} applies, and one can conclude that the family of super-level sets of $\Delta$,
  $$ \big\{ \left\{ \Lambda  \in \mathcal{L}  : \Delta(\Lambda ) \geq z \right\} : z \in \r \big\}\,,
 $$ is {\it Borel-Cantelli\/} for $g_1$. The latter by definition means that for any sequence $\{E_t : t \in \n \}$ of sets from the above family one has $$ \mu \big(\{ \Lambda \in \mathcal{L}  : g_t(\Lambda ) \in E_t \text{ for infinitely many } t \in \n \} \big) = \begin{cases} 0 & \text{if }\sum^\infty_{t=1} \mu(E_t) < \infty\\ 1 & \text{ otherwise }\end{cases}$$
which is precisely the conclusion of Theorem \ref{dynamicalkhintchine} in view of Lemma \ref{measures}.
   \end{proof}

It remains to write down the
\begin{proof}[Proof of Lemma \ref{measures}]
In light of Theorem \ref{siegel} and  Lemma \ref{dl}, to prove Lemma \ref{measures} it suffices to show that 
for $\tau$ and $M$ as in Theorem \ref{siegel}, one has 
\eq{n-dl}{\mu(\{ g \in \mathfrak{S}_{\tau,M} : \operatorname{dist}_G(g,e) \geq z\}) \asymp e^{-nz}.} We remark that it follows from \cite[Lemma 5.6]{KM} that \equ{n-dl} holds with some explicitly computable $k$ in place of $n$. However, for completeness, we give the proof here. Consider the projection $G = K \times A \times U \to A$. The Haar measure on $G$ is pushed forward by this mapping to a measure proportional to $\delta(a) da$, where $da$ is the Lebesgue measure on $A$ and $\delta(a)$ is the modulus of conjugation by $a$ on $U$. Explicitly, we can compute $\delta(a)$ as follows: the Lie algebra $\mathfrak{u}$ of $U$ can be described in coordinates as $$\mathfrak{u} = \left\{ \begin{pmatrix} 0 & {\bf x}^T& 0\\ -{\bf x} & 0 & {\bf x}\\ 0 & {\bf x}^T & 0 \end{pmatrix} : {\bf x} \in \r^n \right\}.$$ Clearly $\operatorname{dim}(\mathfrak{u}) = n$. Conjugation by $g_s$ acts on $\mathfrak{u}$ as scalar multiplication by $e^{-s}$,  hence $\delta(g_s)$, which is the determinant of the map $\operatorname{Ad}(g_s): \mathfrak{u} \to \mathfrak{u}$, equals $e^{-ns}$. 
By the discussion of \cite[\S 5]{KM}, it suffices to show that $\operatorname{dist}_G$ on $A$ satisfies $$\int_{\{g_s: \operatorname{dist}_G(g_s,e) \geq z\}} e^{-ns} ds \asymp e^{-nz}.$$ But since $\operatorname{dist}_G(g_s,e) = s$, this immediately follows. 
\end{proof} 

\subsection{Mass Transference and Jarn\'ik's Theorem}\label{jt}

In this section we recall  the machinery of mass transference developed in \cite{BeresnevichVelani_mass_transference}, which allows one 
to derive Jarn\'ik-type results from Khintchine-type results. Specifically, we will derive Theorem \ref{h measures sphere} from Theorem \ref{khintchine}. 

To do this, we first need to establish some background.
A {\it dimension function\/} is a function $f \colon (0,\infty) \to (0,\infty)$ which is
increasing and continuous, and such that $\lim_{r\to0} f(r) = 0$. Given  a metric space $X$, a ball $B = B(x,r) \subset X$ and a dimension function $f$, we define its $f$-{\it volume} to be the quantity $$V^f(B) := f(r).$$ For a subset $A \subset X$ and $\ve > 0$, we define the quantity $$H^f_\ve(A) := \inf\left\{ \sum_i V^f(B_i) : A \subset \bigcup_i B_i, r(B_i) < \ve\right\}.$$ Denote by $H^f(A)$ the $f$-{\it dimensional Hausdorff measure} of $A$, given by $$H^f(A) = \lim_{\ve \to 0} H^f_\ve(A) = \sup_{\ve > 0} H^f_\ve(A).$$ 

\noindent Here and hereafter we let $r(B)$ denote the radius of the ball $B$. 


Given a ball $B = B(x,r) \subset X$ and a dimension function $f$, we will form a new ball $$B^f := B\big(x,f(r)^{1/n}\big).$$ 
The following theorem is a special case of \cite[Theorem 3]{BeresnevichVelani_mass_transference}:
\begin{Thm}[Mass Transference Principle]
\label{mass transference}
Let $\{B_i\}$ be a countable collection of balls in an $n$-dimensional manifold $X$  such that $r(B_i) \to 0$ as $i \to \infty$. Let $f$ be a dimension function such that $r^{-n} f(r)$ is monotonic, and suppose that for any ball $B \subset X$, \eq{lebesgue case}{H^n(B \cap \limsup_i B^f_i) = H^n(B).} Then for any ball $B \subset X$ one has \eq{h measure case}{H^f(B \cap \limsup_i B_i) = H^f(B).}
\end{Thm}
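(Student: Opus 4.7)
The plan is to invoke the mass distribution principle: if a probability measure $\mu$ is supported on a compact subset $E$ of $B \cap \limsup_i B_i$ and satisfies $\mu(B(x,r)) \ll f(r)$ uniformly in small balls, then $H^f(E) > 0$. So the main task will be to construct such a measure inside any given ball $B$. The hypothesis \equ{lebesgue case} supplies the combinatorial richness of the family $\{B_i^f\}$, which $H^n$-covers any ball modulo null sets, and the key identity $H^n(B_i^f) \asymp f(r(B_i))$ will bridge the two measure-theoretic worlds: assigning mass to each $B_i$ proportional to the Lebesgue volume of its enlargement $B_i^f$ is the same, up to constants, as assigning mass proportional to $f(r(B_i))$.

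The construction would proceed by recursion. Fix $B \subset X$, choose a rapidly decaying sequence of scales $\rho_0 > \rho_1 > \cdots$ tending to zero, and build nested finite collections $\mathcal{K}_k$ of pairwise disjoint balls of the form $B_i$ with radii in a range dictated by $\rho_k$, each contained inside a parent in $\mathcal{K}_{k-1}$. To pass from stage $k$ to stage $k+1$, inside each $B_k^{(j)} \in \mathcal{K}_k$ the hypothesis \equ{lebesgue case} yields infinitely many balls $B_i^f \subset B_k^{(j)}$ of arbitrarily small radii; I would select from these a finite subfamily whose union covers a fixed fraction of $H^n(B_k^{(j)})$, extract a pairwise disjoint subcollection by a Vitali or $5r$-covering argument, and take the corresponding original balls $B_i$ as the children of $B_k^{(j)}$. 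Define $\mu$ on the compact intersection $E := \bigcap_k \bigcup \mathcal{K}_k$ by distributing mass at each stage in proportion to $f(r(B_i))$, normalized so that the total mass within any parent ball equals the mass previously assigned to that parent.

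By construction every point of $E$ lies in $B_i$ for infinitely many $i$ with $r(B_i) \to 0$, so $E \subset \limsup_i B_i$. The hard part will be to verify the uniform bound $\mu(B(x,r)) \ll f(r)$: this splits into regimes by comparing $r$ to the scales $\rho_k$, and when $r$ is comparable to some $\rho_k$, a ball $B(x,r)$ meets only a volume-bounded number of elements of $\mathcal{K}_k$ by disjointness, giving a $\mu$-mass estimate that relies crucially on the monotonicity of $r^{-n}f(r)$ to compare $f(r(B_i))$ favorably to $f(r)$. Once the mass distribution estimate is in place, the principle yields $H^f(B \cap \limsup_i B_i) > 0$ for every ball $B$, and I would upgrade this positivity to the full-measure statement \equ{h measure case} via a standard density-point argument exploiting the Borel regularity of $H^f$.
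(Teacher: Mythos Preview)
The paper does not prove this theorem; it quotes it as a special case of \cite[Theorem 3]{BeresnevichVelani_mass_transference} and uses it as a black box in the derivation of Theorem~\ref{h measures sphere}. There is therefore nothing in the paper to compare your argument against.

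That said, your sketch is a faithful outline of the original Beresnevich--Velani proof: a Cantor-type construction of nested disjoint subcollections of the $B_i$, with mass distributed in proportion to $f(r(B_i)) \asymp H^n(B_i^f)$, followed by the mass distribution principle and a density upgrade from positive to full $H^f$-measure. The one place where your sketch is thin relative to the actual argument is the covering step. A bare Vitali $5r$-extraction does not by itself control both the number and the separation of the children well enough to get the uniform bound $\mu\big(B(x,r)\big) \ll f(r)$ at \emph{all} scales between consecutive $\rho_k$; in \cite{BeresnevichVelani_mass_transference} this is handled by their quantitative $K_{G,B}$-lemma, which at each stage selects a finite disjoint family whose $f$-enlargements are also disjoint and cover a fixed fraction of the parent's Lebesgue volume, with the scales $\rho_k$ chosen inductively after the selection. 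With that refinement in place, your outline is correct.
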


Note that $H^n$ is equivalent to Lebesgue measure, thus \equ{lebesgue case} is simply the statement that $\lambda(\limsup_i B^f_i) = 1$, where $\lambda$ denotes the normalized Lebesgue measure on the sphere. 

\begin{proof}[Proof of Theorem \ref{h measures sphere}]
The strategy  for deriving Theorem \ref{h measures sphere} from Theorem \ref{khintchine} is now clear: fix a dimension function $f$ and an approximating function $\phi$, and let the collection $\{B_i\}$ enumerate the balls $\left\{B\left(\pq, \phi(q)\right)\right\}$. Since there are only finitely many rationals with specified denominator, we may assume that $r(B_i) \to 0$ as $i \to \infty$. 

Since $B\left(\pq, \phi(q)\right)^f = B\left(\pq, f\big(\phi(q)\big)^{1/n}\right)$,  to verify \equ{lebesgue case} we need to apply Theorem \ref{khintchine} with $\phi$ replaced by $(f \circ \phi)^{1/n}$ (this is where the assumption \equ{extraregularityhd} is used). The Lebesgue measure of $A\big((f \circ \phi)^{1/n}, S^n\big)$ is either full or null depending on the divergence or convergence of the sum $$\sum_k k^{n-1} \left(f(\phi(k)^{1/n}\right)^n = \sum_kk^{n-1} f\big(\phi(k)\big)\,.$$ 

Suppose first that this sum diverges. Then \equ{lebesgue case} holds, and thus by Theorem \ref{mass transference}, we see that $$H^f\big(A(\phi,S^n)\big) = H^f(S^n),$$ as needed. 

Now suppose that the sum converges. We claim that this implies $$\displaystyle\sum_{\pq \in S^n} f\big(\phi(q)\big) < \infty\,.$$ Indeed, it follows from \cite{HB} that 
\begin{eqnarray*}
\sum_{\pq \in S^n} f(\phi(q)) &=& \sum_{\ell=1}^\infty \sum_{\{\pq \in S^n,\ q \in (2^{\ell-1}, 2^\ell]\}} f\big(\phi(q)\big)\\ &\ll& \sum^\infty_{\ell=1} 2^{\ell n} f\big(\phi(2^{\ell-1})\big)\\ &\ll& \int_{t\geq1} (2^{t-1})^n f(\phi(2^{t-1}))\, dt\\ &\ll& \int_{t\geq1} t^{n-1} f\big(\phi(t)\big) \,dt\\ &\ll& \sum_{k = 1}^\infty k^{n-1} f\big(\phi(k)\big) < \infty.
\end{eqnarray*}
Moreover, since $\phi(k) \to 0$ as $k \to \infty$, for any $\ve > 0$ we can choose $N$ such that $\phi(q)  < \ve$ for $q \geq N$. As a result, $$A(\phi,S^n) \subset \bigcup_{\{\pq \in S^n :\ q \geq N\}} B\left(\pq, \phi(q)\right)$$ is a cover which satisfies $r(B_i) < \ve$. It follows that $H^f_\ve\big(A(\phi,S^n)\big)$ is bounded above by the tail of a convergent sum. Hence $$H^f\big(A(\phi,S^n)\big) = \lim_{\ve \to 0} H^f_\ve\big(A(\phi,S^n)\big) = 0\,.$$ 

It remains to verify the last statement of the theorem. Since 
$$\sum_{r=1}^\infty r^{n-1} \phi_\tau(r)^{n/\tau} = \sum^\infty_{r=1} r^{n-1} (r^{-\tau})^{n/\tau} = \sum^\infty_{r=1} r^{-1} = \infty\,,$$
from the part we just proved it follows that $H^{n/\tau}\big(A(\phi_\tau,S^n)\big) = \infty$.  Similarly, for $s > \frac{n}{\tau}$ one has $$\sum^\infty_{k=1} k^{n-1} \big(\phi_\tau(k)\big)^s = \sum^\infty_{k=1} k^{n-1} k^{-s\tau} < \infty$$ because $-s\tau < -n$. Therefore, again by the main part of the theorem,  $H^s\big(A(\phi_\tau,S^n)\big) = 0$. Since $$\operatorname{dim}(X) = \sup\{d : H^d(X) = \infty\} = \inf \{d : H^d(X) = 0\},$$ we can conclude that $\operatorname{dim}\big(A(\phi_\tau,S^n)\big) = \frac{n}{\tau},$ finishing the proof.
 \end{proof}

\subsection{
Further developments}\label{alt methods}

In a follow-up paper \cite{FKMS}  currently in preparation, jointly with L.\ Fishman and D.\ Simmons we are addressing a more general problem of intrinsic approximation on arbitrary quadratic varieties. In particular, there we point out that  all the results of the present paper hold in the set-up of a rational quadratic variety $$X := \{\x\in\r^{n+1} : {\mathfrak q}(\x) = 1\}$$ such that 
$X\cap \q^{n+1}$ is non-empty and the associated quadratic form $$Q(x_1,\dots,x_{n+2}) := {\mathfrak q}(x_1,\dots,x_{n+1}) - x_{n+2}^2$$ has rank one. 
This can be shown via a connection between intrinsic approximation on $X$ as above and approximation of limit points of a lattice in $\SO(n+1,1)$ by its parabolic fixed points. Moreover, it follows that  rational points on those varieties form a {\it locally ubiquitous system\/}, see \cite{BDV} or \cite{Drutu} for a definition. The method of ubiquitous systems was used in \cite{Drutu} to derive an analog of Theorem \ref{h measures sphere} for general quadratic forms; as mentioned previously, this method allows one to replace  the assumptions \equ{extraregularity} and \equ{extraregularityhd} of Theorems \ref{khintchine}  and \ref{h measures sphere} with just the monotonicity of $\phi$.

\ignore{Namely, one can derive an analogue of Theorem \ref{true dirichlet}, and hence  Theorem~\ref{special dirichlet}, using an alternative approach: relating intrinsic approximation on $X$ as above to the set-up of approximation of limit points of a lattice in $\SO(n+1,1)$ by its parabolic fixed points, see \cite{Pat} for the case $n=1$ and  \cite{SV} for the general case. Then, using Theorem \ref{true dirichlet} and \equ{count} one can show that }

More generally, when the rank of $Q$ as above is bigger than one, the uniform versions of  Theorem \ref{true dirichlet} and \ref{special dirichlet} do not always hold. However it is possible to use other methods, in particular a generalization of the dynamical approach developed in the present paper, to prove a non-uniform version of Theorem  \ref{special dirichlet}, with $C$ depending on the point being approximated, and to establish analogues of 
other results from this paper
 for arbitrary rational quadratic varieties. Details will appear in the forthcoming paper \cite{FKMS}.

\ignore{
. Indeed, using Theorem \ref{true dirichlet} and \equ{count} one can verify ubiquity along precisely the same lines as \cite[Lemma 1]{BDV}, and this in fact gives a stronger form of Theorem \ref{khintchine} which holds for $\phi$ decreasing, as mentioned in the Introduction.

But even more is true. During our investigations of ubiquity in the general context considered in \cite{FKMS}, we realized that in fact rational approximation on spheres is a special case of approximating limit points of a lattice by its parabolic fixed points (see \cite{BDV} for definitions). This observation, which will appear in \cite{FKMS}, yields alternative proofs of our main results. In this context, Theorem \ref{true dirichlet} dates back to \cite{Pat} in the case $n=1$, and for all $n$ can be found in \cite{SV}. Similarly, Theorem \ref{khintchine} has appeared numerous times (\cite{Sull}, \cite{SV}, \cite{Pat}) although traditionally with {\it some} regularity condition analogous to ours. As mentioned earlier, in \cite{BDV} the authors are able to use ubiquitous systems to weaken this condition to decreasing. Theorem \ref{sphere jarnik} was first proven in \cite{HV}.
This observation further motivates our approach to these questions, because as mentioned in \S\ref{kt}, the relationship between approximations by parabolics and geodesic excursions into cusps dates back to \cite{Sull} and has been used many times since in this context, particularly to obtain analogs of Theorem \ref{sphere jarnik} (e.g. \cite{HV}, \cite{Drutu}). 

What is novel about our approach is that historically the dynamics in question occur on the hyperbolic manifold $\h^{n+1}/\SO(n+1,1)_\z$ and make crucial use of hyperbolic geometry. In terms of the quadratic form $Q$ defined by \equ{q}, one considers orbits on the level set $\{Q = -1\}$. We have obviously elected instead to consider a dynamical system on $\mathcal{L} = G/\Gamma$, coming from examining orbits on the light cone $L = \{Q = 0\}$. Although the two spaces are similar in this case, the advantage of this change in perspective is that these methods can be applied to the more general case of quadratic forms, whereas the notion of approximation by parabolics cannot. As stated previously, the methods described here translate to the the main ideas of the general case, albeit with none of the difficulties or striking subtleties. 

Applications of the notion of ubiquitous systems to quadratic forms will be discussed thoroughly in the sequel \cite{FKMS}. }

\ignore{
%
}

\end{document}